\date{\today}
\def\deg{\text{deg}\,}
\def\R{{\mathbf R}}
\def\C{{\mathbf C}}
\def\P{{\mathbf P}}
\def\O{{\mathcal O}}
\def\Q{{\mathbf Q}}
\def\Re{{\rm Re\,  }}
\def\1{\mathbf 1}
\def\Z{{\mathbf Z}}
\def\N{{\mathbf N}}
\def\Atrans{\check A}
\def\A{A}
\def\be{\begin{equation}}
\def\ee{\end{equation}}
\DeclareMathOperator{\id}{id}
\DeclareMathOperator{\PL}{{\mathrm PL}}
\DeclareMathOperator{\Vol} {Vol}
\DeclareMathOperator{\conv}{conv}
\newtheorem{thm}{Theorem}[section]
\newtheorem{lma}[thm]{Lemma}
\newtheorem{prop}[thm]{Proposition}
\newtheorem*{thmA}{Theorem A}
\newtheorem*{thmA'}{Theorem A'}
\newtheorem*{thmB'}{Theorem B'}
\newtheorem*{thmB}{Theorem B}
\newtheorem*{thmC}{Theorem C}
\newtheorem*{thmC'}{Theorem C'}
\newtheorem*{corD}{Corollary D}
\newtheorem*{corD'}{Corollary D'}
\theoremstyle{definition}
\theoremstyle{remark}
\newtheorem{preremark}[thm]{Remark}
\newtheorem{preex}[thm]{Example}
\newenvironment{remark}{\begin{preremark}}{\qed\end{preremark}}
\newenvironment{ex}{\begin{preex}}{\qed\end{preex}}
\numberwithin{equation}{section}
\begin{document}

\title[Stabilization of monomial maps in higher
  codimension]{Stabilization of monomial maps in \\ higher codimension}

\date{\today}
\thanks{}
\author{Jan-Li Lin}
\address{Department of Mathematics, Indiana University, Bloomington,
  IN 47405, USA}
\email{janlin@indiana.edu}

\author{Elizabeth Wulcan}
\address{Department of Mathematics, Chalmers University of Technology and the University of Gothenburg\\ SE-412 96 G{\"o}teborg, Sweden}
\email{wulcan@chalmers.se}

\subjclass{}

\keywords{}

\begin{abstract}
A monomial self-map $f$ on a complex toric variety is said to be $k$-stable if
the action induced on
the $2k$-cohomology is compatible with iteration.
We show that under suitable conditions
on the eigenvalues
of the matrix of exponents of $f$,
we can find a toric model with at
worst quotient singularities where
$f$ is $k$-stable.
If $f$ is replaced by an iterate one can find a $k$-stable model
as soon as
the dynamical degrees
$\lambda_k$ of $f$ satisfy $\lambda_k^2>\lambda_{k-1}\lambda_{k+1}$.
On the other hand, we give examples of monomial maps $f$, where this condition is
not satisfied and where the degree sequences $\deg_k(f^n)$ do not
satisfy any linear recurrence.
It follows
that such an $f$ is not $k$-stable on any toric model with at worst quotient singularities.

\end{abstract}

\maketitle
\section*{Introduction}
When studying the dynamics of a dominant meromorphic self-map
$f:X\dashrightarrow X$ on a compact complex manifold $X$ it is often desirable that the action of $f$ on
the cohomology of $X$ be compatible with iterations. Following
Sibony~\cite{S} and Dinh-Sibony \cite{DS09} (see also~\cite{FS}) we
then say that $f$ is \emph{(algebraically) stable}. More precisely, if $f^*$ denotes the
induced action on $H^{2k}(X)$ we say that $f$ is \emph{$k$-stable} if
$(f^n)^*=(f^*)^n$ for all $n$. For examples of classes of $k$-stable
maps see, e.g., \cite{DS09, DS05b}.

If $f$ is not stable, one can look for a model $X'$, birational to $X$,
where the induced self-map on $X'$ is stable.
As shown by Favre \cite{Fa} this is not always possible to achieve. However, for large classes of
surface maps and monomial maps, one can find models $X'\to X$ (with at
 worst quotient
 singularities), so that $f$
lifts to a $1$-stable map, see \cite{DF, Fa, FJ, L1, JW,
   L3}.

In this paper we address
 the question of finding a $k$-stable model for the special class of
 monomial maps, but for arbitrary $k$.
Monomial maps on complex
 projective space $\P^m$, or more generally, on toric varieties, correspond to integer-valued $m\times
 m$-matrices, $M_m(\Z)$.
For $A\in M_m(\Z)$
with entries $a_{ij}$ we write $f_A$
 for the corresponding monomial map
$$f_A(z_1,\ldots, z_m)=(z_1^{a_{11}}\cdots z_m^{a_{m1}}, \ldots,
z_1^{a_{1m}}\cdots z_m^{a_{mm}})$$ with $(z_1,\ldots, z_m)\in
(\C^*)^m$. This mapping is holomorphic on the torus $(\C^*)^m$ and
extends as a rational map to $\P^m$ or to any toric
variety. It is dominant precisely if $\det A\neq 0$. Note that
$f_A^\ell=f_{A^\ell}$.

\begin{thmA}
Assume that the eigenvalues of $A\in M_m(\Z)$ are real and satisfy
 $\mu_1 > \ldots > \mu_m > 0$ or $\mu_1 < \ldots < \mu_m <0$.  Then there is a projective toric variety
$X$, with at worst quotient singularities, such that $f_A:X\dashrightarrow X$ is
$k$-stable for $k=1,\ldots, m-1$.
\end{thmA}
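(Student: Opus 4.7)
The plan is to construct an explicit simplicial projective toric variety $X_\Sigma$ adapted to the eigenstructure of $A$, and to verify $k$-stability directly from the combinatorics of the fan $\Sigma$.

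I would first set up the combinatorial framework. For a simplicial projective toric variety $X_\Sigma$, the rational cohomology $H^{2k}(X_\Sigma,\Q)$ is the codimension-$k$ part of the Chow ring and is generated by the classes $[V(\sigma)]$ of the closed orbits indexed by $k$-cones $\sigma$ of $\Sigma$. The pullback $f_A^*[V(\sigma)]$ can be described by taking a refinement $\widetilde\Sigma$ of $\Sigma$ on which $A^T$ defines a map of fans $\widetilde\Sigma \to \Sigma$, and writing $f_A^*$ as the composition of the pullback along the toric blow-down $X_{\widetilde\Sigma} \to X_\Sigma$ with the pushforward along the equivariant morphism $X_{\widetilde\Sigma} \to X_\Sigma$ induced by $A^T$. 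Unwinding this, $f_A$ is $k$-stable on $X_\Sigma$ exactly when no $k$-dimensional contribution in the pullback of $[V(\sigma)]$ is absorbed into a higher-codimension stratum of $\Sigma$ under iteration; equivalently, for every $n$ and every $k$-cone $\tau$ of any refinement $\Sigma^{(n)}$ with $(A^T)^n\tau \subseteq \sigma$ for some $\sigma \in \Sigma$, the cone $\tau$ must itself refine a $k$-cone of $\Sigma$ rather than lie inside a lower-dimensional face.

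Next, I would construct $\Sigma$ using the real eigenflag of $A^T$. By hypothesis, $A^T$ is $\R$-diagonalisable with eigenvalues $\mu_1 > \cdots > \mu_m > 0$ (the negative case being analogous after passage to $A^2$) and eigenvectors $v_1, \ldots, v_m$, yielding an $A^T$-invariant complete flag $L_i = \mathrm{span}(v_1, \ldots, v_i)$. I would choose a simplicial projective fan $\Sigma$ whose support is all of $\R^m$ and whose cones are adapted to this flag, in the sense that each cone lies in a single chamber of the hyperplane arrangement given by the flag subspaces. Since the $v_i$ are generically irrational, the rays of $\Sigma$ are taken as rational vectors approximating the flag to sufficient accuracy, and the cone structure is chosen fine enough that iterates of $A^T$ preserve this chamber structure up to refinement. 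Projectivity is ensured by realising $\Sigma$ as the normal fan of a rational polytope with vertices compatible with the flag.

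Finally, I would verify $k$-stability for $1 \leq k \leq m-1$ using the eigenvalue gaps. Because $\mu_i/\mu_{i+1} > 1$ for every $i$, iterating $A^T$ contracts every $k$-dimensional linear subspace exponentially fast toward $L_k$. Combined with the flag-adaptedness of $\Sigma$, this implies that for any $n$ and any $k$-cone $\tau$ of $\Sigma^{(n)}$ with $(A^T)^n\tau \subseteq \sigma \in \Sigma$, the cone $\tau$ is itself refined from a $k$-cone of $\Sigma$, giving precisely the combinatorial criterion established in the first step. The main obstacle will be the construction in the second step: producing a single rational, simplicial, projective fan whose cones respect the real (typically irrational) eigenflag of $A$ finely enough to ensure the contraction property in every codimension $k$ simultaneously. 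The strict gap condition $\mu_i > \mu_{i+1}$ is exactly what makes this construction feasible and reflects the strict log-concavity $\lambda_k^2 > \lambda_{k-1}\lambda_{k+1}$ of the dynamical degrees highlighted in the abstract.
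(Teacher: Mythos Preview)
Your proposal rests on a criterion that is not established. In the first step you assert a combinatorial characterization of $k$-stability in terms of how $k$-cones of refinements map under $(A^T)^n$. For $k=1$ such a criterion is known, but for $k\ge 2$ the paper explicitly notes that ``we do not in general understand what it means geometrically to be $k$-stable, nor if there is a translation into the language of fans for monomial maps.'' Your equivalence is asserted without proof, and everything afterwards hinges on it. Even granting the criterion, the contraction claim in your third step is false as stated: it is not true that every $k$-dimensional linear subspace is attracted to $L_k$ under iteration of $A^T$---for instance $\mathrm{span}(v_2,\ldots,v_{k+1})$ is $A^T$-invariant and never approaches $L_k$. Only \emph{generic} $k$-planes are so attracted, and the cones of a fixed rational fan span specific, non-generic subspaces; your flag-adaptedness is too vague to exclude such cones, and it is unclear how a single rational simplicial fan can be adapted to a typically irrational flag in all codimensions simultaneously. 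Finally, passing to $A^2$ in the negative-eigenvalue case would only stabilize $f_{A}^2$, not $f_A$ itself.

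The paper sidesteps all of this by avoiding any geometric criterion for $k$-stability. It builds a specific toric variety---a ``skew $(\P^1)^m$'' from the fan $\{\sum_j \R_+\varepsilon_j v_j\}_{\varepsilon\in\{0,\pm1\}^m}$---on which $f_A^*$ acting on $H^{2k}$ is computed \emph{explicitly}: the matrix entries are $|A_{IJ}|$, the absolute values of the $k\times k$ minors of $A$ in the basis dual to $(v_j)$ (Lemma~\ref{pullback}). By Cauchy--Binet, $k$-stability follows as soon as all $k\times k$ minors have a common sign (Lemma~\ref{stable}). The crucial external input is the Barrett--Johnson theorem that any list of distinct positive reals is the spectrum of a strictly totally positive matrix; conjugating $A$ to such a matrix and perturbing to a rational basis makes every minor strictly positive. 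For negative eigenvalues one writes $A=-B$ with $B$ totally positive, so that all $k\times k$ minors have sign $(-1)^k$, and Lemma~\ref{stable} again applies.
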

The definition of $k$-stable extends verbatim to toric varities with
at worst quotient singularities, cf. Section ~\ref{maps}.

If the eigenvalues of $A$ only satisfy $|\mu_1|> \ldots > |\mu_m|>0$,
 it is not always possible to find a stable model, see,
e.g., \cite[Example~6.3]{JW}. Still, since
$A^2$ has positive and distinct eigenvalues, by Theorem~A we can find a model so
that $f_A^2$ becomes stable. In fact, there is an
$\ell_0$ such that $f^\ell$ is $k$-stable for $\ell\geq \ell_0$ and
each $k$.


\begin{thmB}
Assume that the eigenvalues of $A\in M_m(\Z)$, ordered so that $|\mu_1|\geq
\ldots \geq |\mu_m|$, satisfy $|\mu_{k_j}|>|\mu_{k_j+1}|$ for
$j=1,\ldots, s$ and $|\mu_m|>0$.
Then there is a projective toric variety
$X$, with at worst quotient singularities, and
$\ell_0\in \N$, such that $f_A^\ell:X\dashrightarrow X$ is
$k_j$-stable for $\ell\geq \ell_0$ and $j=1,\ldots, s$.
\end{thmB}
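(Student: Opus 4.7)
The plan is to adapt the toric construction from Theorem~A, replacing the full flag of eigenspaces of $A$ by the coarser flag determined by the jumps in the moduli $|\mu_i|$. For each $j = 1,\ldots,s$, I would set $V_j\subseteq\R^m$ to be the sum of the real generalized eigenspaces of $A$ corresponding to eigenvalues $\mu$ with $|\mu|\geq|\mu_{k_j}|$. The relevant set of eigenvalues is stable under complex conjugation, so $V_j$ is a characteristic subspace of the integer matrix $A$, hence defined over $\Q$, of dimension $k_j$, and $A^\ell$-invariant for every $\ell$. This yields an $A$-invariant rational flag $V_1\subset V_2\subset\cdots\subset V_s\subset\R^m$.

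Next I would build a projective simplicial fan $\Sigma$ in $\Z^m$ compatible with this flag, meaning each $V_j$ is a union of cones of $\Sigma$. This can be achieved by picking a lattice basis of $\Z^m$ adapted to the flag and taking $\Sigma$ to be a suitable subdivision of the corresponding octahedral fan, following the construction in Theorem~A but refined only enough to respect the $s$-step flag rather than a full flag of length $m-1$. The resulting toric variety $X=X_\Sigma$ is projective with at worst quotient singularities, and the cohomology classes relevant for $k_j$-stability are represented by cycles supported on $V_j$.

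Finally, I would verify that $f_A^\ell$ is $k_j$-stable on $X$ for $\ell$ sufficiently large. The key input is that $\wedge^{k_j}V_j$ is an eigenline of $\wedge^{k_j}A$ with eigenvalue $\mu_1\cdots\mu_{k_j}$ of maximal modulus $\lambda_{k_j}(f_A)$, while the strict inequality $|\mu_{k_j}|>|\mu_{k_j+1}|$ provides a spectral gap. For $\ell\geq\ell_0$, this gap should dominate all sub-leading contributions coming from rotations and Jordan blocks within each equal-modulus block, so that the action of $(f_A^\ell)^*$ on the classes dual to $V_{k_j}$ commutes with iteration. The hard part will be exactly this last step: one must show that only the flag $\{V_j\}$, and not the finer internal structure of $A$ within each equal-modulus block, controls $k_j$-stability, and then choose $\ell_0$ large enough to absorb the sub-leading Jordan and rotational terms so that no $k_j$-cone is ``displaced'' from the subfan representing $V_j$ under iteration.
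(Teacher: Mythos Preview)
Your proposal has a genuine gap: the claim that $V_j$ is defined over $\Q$ is false. Closure under complex conjugation only guarantees that $V_j$ is defined over $\R$; for definition over $\Q$ you would need the set $\{\mu : |\mu|\ge|\mu_{k_j}|\}$ to be stable under the full Galois group of the splitting field of the characteristic polynomial, and this fails already for $A=\left(\begin{smallmatrix}0&1\\1&1\end{smallmatrix}\right)$, whose dominant eigenspace is the line through $\bigl(1,\tfrac{1+\sqrt5}{2}\bigr)$. Since a fan consists of rational cones, you cannot make an irrational $V_j$ a union of cones of $\Sigma$, and the second step of your plan does not go through.

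The paper sidesteps this by never asking the fan to contain the eigenspaces. It fixes a real Jordan basis $\rho_1,\ldots,\rho_m$ for $A$ (typically irrational) and then, via a totally positive / Perron--Frobenius argument (Lemma~\ref{orthant}), produces a \emph{rational} basis $\epsilon_1,\ldots,\epsilon_m$ such that for every $k$ the dominant eigenvector $\rho_{[k]}$ of $\Lambda^kA$ lies in the interior of the first orthant $\sigma_k=\sum_{|I|=k}\R_+\epsilon_I$ and the complementary invariant hyperplane meets $\sigma_k$ only at the origin. The toric variety is then the fixed ``skew $(\P^1)^m$'' of Section~\ref{skew} built from the $\epsilon_j$, not a coarse flag fan. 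The spectral gap at $k=k_j$ forces $(\Lambda^{k_j}A)^\ell\sigma_{k_j}\subset\pm\sigma_{k_j}$ for large $\ell$, i.e.\ all $k_j\times k_j$ minors of $A^\ell$ in the $\epsilon$-basis share a sign, and Lemma~\ref{stable} converts this directly into $k_j$-stability. Your final paragraph has the right dynamical input, but without the concrete minor-sign criterion of Lemma~\ref{stable} there is no mechanism linking attraction toward $\rho_{[k_j]}$ to $k_j$-stability on the model you describe.
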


Recall that the \emph{$k$th degree} $\deg_k(f)$
of the rational self-map $f:\P^m\dashrightarrow
\P^m$ is defined as $\deg f^{-1}(L_k)$ where $L_k$ is a generic linear
subspace of $\P^m$ of codimension $k$.
In \cite{FW, L2} it was proved that the \emph{$k$th dynamical degree}
$$\lambda_k=\lambda_k(f_A):=\lim_n(\deg_k (f_A^n))^{1/n}$$ of $f_A,$
introduced by Russakovskii-Shiffman \cite{RS},
is equal to $|\mu_1|\cdots |\mu_k|$,
 if the eigenvalues of $A$ are ordered so
that $|\mu_1|\geq \ldots \geq |\mu_m|$.
It follows that the condition
$|\mu_k|>|\mu_{k+1}|$ is equivalent to
$\lambda_k^2>\lambda_{k-1}\lambda_{k+1}$. In general the dynamical
degrees satisfy $\lambda_k^2\geq \lambda_{k-1}\lambda_{k+1}$; for
this and other basic properties of
dynamical degrees, see, e.g., \cite{DS05a, G, RS}.
Thus, in particular, Theorem~B says that if we are only interested in the action of $f_A^*$ on
$H^{2k}(X)$,  we can find good models as soon as
$\lambda_k^2>\lambda_{k-1}\lambda_{k+1}$.
One could ask if this is true for general meromorphic $f:X\dashrightarrow X$. Is
it always possible to find a model $X'$ birational to $X$ so that
$f^\ell$ is $k$-stable for $\ell$ large enough when
$\lambda_k^2>\lambda_{k-1}\lambda_{k+1}$?
The problem of finding stable models for $f$ is related to the question
whether the degree sequence $\deg_k(f^n)$ satisfies a linear
recurrence.

\begin{thmC}
For $1\leq k\leq m-1$, assume that the eigenvalues of $A\in M_m(\Z)$
satisfy
\begin{equation}\label{film}
|\mu_{k-1}|>|\mu_k|=|\mu_{k+1}|>|\mu_{k+2}|
\end{equation}
and moreover that $\mu_k/\mu_{k+1}$ is not a root of unity.
Then the degree sequence $\deg_{k}(f_A^n)$ does not satisfy
any linear recurrence.
\end{thmC}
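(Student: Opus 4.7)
The plan is to argue by contradiction, combining an unconditional asymptotic expansion of $u_n:=\deg_k(f_A^n)$ with spectral obstructions from the theory of Bohr almost-periodic sequences. The first step is to establish the asymptotic. The hypotheses --- $A$ real, $|\mu_k|=|\mu_{k+1}|$, $\mu_k/\mu_{k+1}$ not a root of unity --- force $\mu_k,\mu_{k+1}$ to be a non-real conjugate pair $re^{\pm i\phi}$ with $\phi/\pi\notin\Q$. Using the formula for $\deg_k(f_A^n)$ from \cite{FW, L2} (expressing it as a maximum of a finite family of positive linear functionals applied to $\wedge^k A^n$) and diagonalising $A$ over $\C$, the top-modulus contributions to $\wedge^k A^n$ come from the two complex-conjugate products $\mu_1\cdots\mu_{k-1}\mu_k$ and $\mu_1\cdots\mu_{k-1}\mu_{k+1}$, of common modulus $\lambda_k$. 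Here $\mu_1\cdots\mu_{k-1}\in\R$, since the strict gap $|\mu_{k-1}|>|\mu_k|$ groups all non-real eigenvalues of $A$ of modulus $>|\mu_k|$ into conjugate pairs inside $\{\mu_1,\ldots,\mu_{k-1}\}$. Each functional $L$ in the family therefore contributes a leading term $2|a_L|\lambda_k^n\cos(n\phi+\psi_L)+o(\lambda_k^n)$, and taking the maximum gives
\[
u_n=\lambda_k^n\,F(n\phi)+o(\lambda_k^n),\qquad F(t)=\max_L 2|a_L|\,\bigl|\cos(t+\psi_L)\bigr|.
\]

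Next, suppose for contradiction that $u_n$ satisfies a linear recurrence, so $u_n=\sum_j P_j(n)\gamma_j^n$. Since $F$ is bounded, $u_n=O(\lambda_k^n)$; combined with $\lim u_n^{1/n}=\lambda_k$, the dominant indices have $|\gamma_j|=\lambda_k$ with the corresponding $P_j$ constant, so
\[
u_n/\lambda_k^n=\sum_j c_j e^{in\theta_j}+o(1),
\]
a Bohr almost-periodic sequence with \emph{finite} spectrum $\{\theta_j\}$. On the other hand, equidistribution of $n\phi$ on $\R/2\pi\Z$ (from $\phi/\pi\notin\Q$) identifies the Bohr spectrum of the sequence $F(n\phi)$ with $\{k\phi:\widehat{F}(k)\neq 0\}$, so $F$ must have only finitely many non-zero Fourier coefficients, i.e., $F$ must be a trigonometric polynomial.

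The key step --- and the main obstacle --- is to show that $F$ is \emph{not} a trigonometric polynomial, yielding the desired contradiction. This should hold because each summand $|\cos(t+\psi_L)|$ has corners at the zeros of $\cos(\cdot+\psi_L)$ (so it is not real-analytic), and the maximum over $L$ either preserves such corners (when one $L$ dominates on an open neighbourhood) or introduces further corners at the switching points between dominating summands; since trigonometric polynomials are real-analytic, $F$ cannot be one. Making this argument fully rigorous requires verifying, from the explicit toric formula, that at least one coefficient $a_L$ is non-zero (so that $F\not\equiv 0$) and that the $\max$ operation does not conspire to cancel the non-analytic features --- most cleanly, by exhibiting a non-vanishing Fourier coefficient $\widehat{F}(N)$ of arbitrarily large index $N$.
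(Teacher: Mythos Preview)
Your Bohr-spectrum strategy is genuinely different from the paper's, but there is a real gap at the first step. Neither \cite{FW} nor \cite{L2} expresses $\deg_k(f_A^n)$ as a maximum of finitely many positive linear functionals of $\wedge^kA^n$; \cite{FW} gives only the mixed-volume identity $\deg_{D,k}(f_A)=m!\,\Vol(AP_D[k],P_D[m-k])$. For $k=1$ on $\P^m$ one can indeed unfold the support-function description into a max of linear forms in the entries of $A$, but for $k\ge 2$ no such formula is on record, and it is not clear one exists. What the present paper extracts from the mixed volume (via Huber--Sturmfels fine mixed subdivisions) is weaker: there is a fixed \emph{finite} set $\Sigma$ of integer matrices such that $\deg_k(f_A^n)=\langle\sigma(n),\wedge^kA^n\rangle$ for some $\sigma(n)\in\Sigma$, with the choice of $\sigma(n)$ depending on $n$ through the combinatorics of the subdivision of $(A^nP_D,P_D)$. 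This does not yield the clean asymptotic $u_n=\lambda_k^n F(n\phi)+o(\lambda_k^n)$ with $F$ a max of scaled $|\cos|$'s; to get that shape you would have to show that $\sigma(n)$ is determined (up to $o(1)$) by $n\phi\bmod 2\pi$, which amounts to controlling how the subdivision evolves with $n$ --- precisely what the paper avoids. Your later step (ruling out $F$ a trigonometric polynomial via non-analyticity) is plausible and can be made rigorous for a finite max of $|\cos|$'s, but it rests on the unestablished form of $F$.

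The paper's argument sidesteps the asymptotic entirely. Since $\Sigma$ is finite, some fixed $\sigma\in\Sigma$ satisfies $\alpha_n:=\deg_k(f_A^n)=\beta_n:=\langle\sigma,\wedge^kA^n\rangle$ for infinitely many $n$. By Cayley--Hamilton applied to $\wedge^kA$, the sequence $\beta_n$ satisfies a linear recurrence. If $\alpha_n$ also did, a Skolem--Mahler--Lech argument would force $\alpha_{a+b\ell}=\beta_{a+b\ell}$ for all large $\ell$ along some arithmetic progression. But under the hypotheses, $\mu_{[k]}$ and $\overline{\mu_{[k]}}$ are the unique eigenvalues of $\wedge^kA$ of top modulus $\lambda_k$, with $\arg\mu_{[k]}$ an irrational multiple of $\pi$; hence $\beta_n=2\Re(C\mu_{[k]}^n)+O(|\mu_{I_3}|^n)$, and (since $C\neq 0$, as follows from $\lim\alpha_n^{1/n}=\lambda_k$ along the subsequence where $\alpha_n=\beta_n$) this is negative for infinitely many $\ell$ on any arithmetic progression, contradicting $\alpha_n>0$. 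Thus positivity of degrees does all the work your non-analyticity argument was meant to do, and no asymptotic for $\alpha_n$ itself is ever needed --- only for the auxiliary, genuinely linear-recurrent $\beta_n$.
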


If $k=1$ or $k=m-1$ the condition \eqref{film} on the moduli of the $\mu_j$ should
be interpreted as $|\mu_1|=|\mu_2|>|\mu_3|$ and
$|\mu_{m-2}|>|\mu_{m-1}|=|\mu_{m}|$, respectively.


\begin{corD}
Assume that $A\in M_m(\Z)$ satisfies the assumption of Theorem~C.
Then for each toric projective variety $X$ with at worst quotient
singularities,
$f_A:X\dashrightarrow X$ is not $k$-stable.
\end{corD}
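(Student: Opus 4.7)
We argue by contradiction, using Theorem~C. Suppose $f_A\colon X\dashrightarrow X$ is $k$-stable for some toric projective variety $X$ with at worst quotient singularities. The goal is to deduce that the degree sequence $\deg_k(f_A^n)$ satisfies a linear recurrence, which contradicts Theorem~C.

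Since $X$ has only quotient singularities, the rational cohomology $H^{2k}(X,\Q)$ is finite-dimensional and pairs nondegenerately with $H^{2(m-k)}(X,\Q)$ under the intersection product. By $k$-stability, $(f_A^n)^*=(f_A^*)^n$ on $H^{2k}(X,\Q)$. Applying the Cayley--Hamilton theorem to the single operator $f_A^*$ gives a monic polynomial $p(t)\in\Q[t]$ of degree $d=\dim_\Q H^{2k}(X,\Q)$ with $p(f_A^*)=0$, and hence a linear recurrence
\[
(f_A^{n+d})^*=c_{d-1}\,(f_A^{n+d-1})^*+\cdots+c_0\,(f_A^n)^*
\]
on operators on $H^{2k}(X,\Q)$. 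Consequently, for any fixed classes $\alpha\in H^{2k}(X,\Q)$ and $\beta\in H^{2(m-k)}(X,\Q)$, the integer sequence $\beta\cdot(f_A^n)^*\alpha$ satisfies the same recurrence.

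It remains to realise $\deg_k(f_A^n)$ as such an intersection number. The plan is to pass to a common toric refinement $Y$ of the fans of $X$ and $\P^m$, obtaining equivariant birational morphisms $\pi\colon Y\to X$ and $\sigma\colon Y\to\P^m$. Letting $h:=\sigma^*[H]\in H^2(Y,\Q)$ be the pullback of the hyperplane class, one has
\[
\deg_k(f_A^n)=h^{m-k}\cdot(\tilde f_A^n)^*h^k,
\]
where $\tilde f_A$ is the monomial map on $Y$. Decomposing $h^k$ and $h^{m-k}$ on $Y$ into $\pi^*$ of classes on $X$ plus $\pi$-exceptional corrections, and exploiting the standard identity $\pi_*\,(\tilde f_A^n)^*\,\pi^*=(f_A^n)^*$ for monomial maps along toric birational morphisms, I would rewrite the right-hand side as a finite $\Q$-linear combination of intersection numbers on $X$ of the form $\beta_i\cdot(f_A^n)^*\alpha_i$. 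Each such term satisfies the recurrence established above, so $\deg_k(f_A^n)$ does too, contradicting Theorem~C.

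The main obstacle is the final reduction from $Y$ back to $X$: one has to track the $\pi$-exceptional contributions and check that monomial pullback is compatible with toric blow-ups in a way that preserves linear recurrences. This step is tractable because for monomial maps the action $(f_A^n)^*$ on torus-invariant cohomology is described combinatorially by the fan data and the matrix $A^n$, and the exceptional contributions live on lower-dimensional toric strata whose degree dynamics is controlled by the same mechanism. Once this bookkeeping is in place, the contradiction with Theorem~C is immediate.
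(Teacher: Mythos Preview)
Your first half is correct and matches the paper: once $f_A$ is $k$-stable on $X$, Cayley--Hamilton applied to $f_A^*$ on $H^{2k}(X)$ forces every sequence $n\mapsto\beta\cdot(f_A^n)^*\alpha$ to satisfy a fixed linear recurrence. This is exactly the content of the paper's proposition linking stability to recurrence of degree sequences.

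The gap is in your reduction of $\deg_k(f_A^n)$ on $\P^m$ to such pairings on $X$. After passing to a common refinement $Y$ and writing $h^k=\pi^*\alpha+E$ and $h^{m-k}=\pi^*\beta+F$ with $\pi_*E=\pi_*F=0$, the identity $\pi_*(\tilde f_A^n)^*\pi^*=(f_A^n)^*$ handles only the term $\pi^*\beta\cdot(\tilde f_A^n)^*\pi^*\alpha$. The remaining contributions involve $(\tilde f_A^n)^*E$, and there is no reason for $\pi_*(\tilde f_A^n)^*E$ to vanish, nor for $F\cdot(\tilde f_A^n)^*E$ to be expressible through the dynamics on $X$. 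The obstruction is that $(\tilde f_A^n)^*$ on $H^{2k}(Y)$ need not equal $((\tilde f_A)^*)^n$: stability was assumed only on $X$, not on the refinement $Y$. Your appeal to ``lower-dimensional toric strata controlled by the same mechanism'' is precisely where an argument is missing; making it rigorous would in effect require a stability-type statement on $Y$, which you do not have.

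The paper avoids this entirely by not comparing with $\P^m$. It first strengthens Theorem~C to Theorem~C': for \emph{every} ample divisor $D$ on \emph{every} simplicial projective toric variety, the sequence $\deg_{D,k}(f_A^n)=(f_A^n)^*[D]^k\cdot[D]^{m-k}$ fails to satisfy any linear recurrence under the same hypotheses on $A$. Corollary~D is then immediate: choose any ample $D$ on the given $X$; $k$-stability plus Cayley--Hamilton yields a recurrence for $\deg_{D,k}(f_A^n)$, contradicting Theorem~C'. The mixed-volume argument behind Theorem~C works uniformly in the polytope $P_D$, so passing from the simplex to an arbitrary $P_D$ costs essentially nothing, whereas transporting degree information across a birational map costs exactly the stability you are trying to disprove.
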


In fact, Corollary ~D follows from a slight generalization of Theorem~C, Theorem ~C', which asserts
that $\deg_{D,k}(f_A^n)$ does not satisfy any linear recurrence, where
$\deg_{D,k}(f_A)$ is the $k$th degree of $f_A$ on a projective toric
variety $X$ with respect to an ample divisor $D$ on $X$, see Section \ref{degreesection}.

Note that if $A$ satisfies the assumption of Theorem ~C, then all powers
$A^\ell$ of $A$ satisfy the assumption as well. Thus we get
that for each $X$ as in Corollary ~D and each
$\ell\in\N$, $f_A^\ell:X\dashrightarrow X$ is not
$k$-stable.
%
%
It would be interesting to investigate whether one can remove the
conditions $|\mu_{k-1}|>|\mu_k|$ and $|\mu_{k+1}|>|\mu_{k+2}|$.
Is it true that $f_A$ cannot
be made
$k$-stable as soon as $|\mu_k|=|\mu_{k+1}|$ and $\mu_k/\mu_{k+1}$ is
not a root of unity?



\smallskip

For $m=2$, Theorems~A and
~B follow from \cite{Fa} and for $m=3$ they follow from
\cite[Theorem~1.1]{L3}.
Moreover, for $k=1$
Theorem~A follows from \cite[Theorem~A]{JW}.
In fact, if $f_A$ is a monomial map on a toric variety $X$, under the assumption $\mu_1> \ldots >\mu_m>0$ one
can find a birational modification $\pi: X'\to X$, with at worst quotient singularities, such that the
lifted mapping $\pi^{-1}\circ f_A \circ \pi: X'\dashrightarrow X'$ is $1$-stable.

Geometrically, $f:X\dashrightarrow X$ is $1$-stable if no iterate of
$f$ sends a hypersurface into the indeterminacy set of $f$, see
\cite{FS, S}. If $f=f_A$ is monomial and $X$ is toric this translates into a
certain condition in terms of the action of $A$ on the fan of $X$, see
\cite[Section~4]{L1} and
\cite[Section~2.4]{JW}.
The construction of a $1$-stable model $X'\to X$ amounts to carefully
refining the fan corresponding to $X$. A model $X'$ that is only birationally equivalent to $X$ can
be obtained in a much less technical way and also for a larger class of
monomial mappings;
for $s=1$ and $k_1=1$, Theorem~B appeared in \cite[Theorem~4.7]{L1} and
\cite[Theorem~B']{JW}, cf.\ Remark ~\ref{compare}.

For $k\geq 2$ we do not in general understand what it means
geometrically to be
$k$-stable, nor if there is a translation into the language of fans
for monomial maps.
In this paper we consider a certain class of toric varieties, where the action of
$f_A^*$ is particularly simple.
Given a basis $\epsilon_1,\ldots, \epsilon_m$ of $\Q^m$
we construct a toric variety $X$, see Section ~\ref{skew}, for which
the
entries of the matrix of $f_A^*:H^{2k}(X)\to H^{2k}(X)$
are the
absolute values of the $k\times k$-minors of $A$ in the basis
$\epsilon_j$ (modulo multiplication by a positive constant). It
follows that a sufficient
condition for $f_A$ to be $k$-stable is that all $k\times k$-minors have the same
sign, see Lemma ~\ref{stable} and Remark~\ref{bakat}.
The basic idea of the proofs of Theorems ~A and ~B is to find bases $\epsilon_j$ so that
this condition is satisfied. The construction will be based on
(strictly) totally positive
matrices, i.e., matrices whose minors are all (strictly)
positive. Typical examples of totally positive matrices are
certain Vandermonde matrices.

Corollary D is due to Favre \cite{Fa} for $m=2$; he showed that if
$|\mu_1|=|\mu_2|$ and $\mu_1/\mu_2$ is
not a root of unity
there
is no model such that (any power of) $f_A$ is stable.
Bedford-Kim \cite{BK} proved Theorem~C for $k=1$ and some cases when
$k>1$, see also
\cite[Theorem~4.7]{L1}.
Following ideas due to Hasselblatt-Propp
\cite{HP} and Bedford-Kim \cite{BK},
we prove Theorem ~C by comparing the degree sequence $\deg_k(f_A^n)$ to a
certain other sequence $\beta_n$, which satisfies a linear
recurrence.
If $\deg_k(f_A^n)$ satisfied a linear recurrence the
set of $n$ for which $\deg_k(f_A^n)=\beta_n$ would be eventually
periodic, which we show cannot be the
case, see. To do this we express $\deg_k(f_A^n)$ in terms of minors of
$A^n$ using a result from \cite{FW}, which expresses $\deg_{k}(f_A)$
as a mixed volume of certain polytopes, and
a method due to Huber-Sturmfels \cite{HS} of computing mixed volumes of
polytopes.




\smallskip

\textbf{Acknowledgment}
We thank Eric Bedford, Mattias Jonsson, and Pavlo Pylyavskyy for fruitful
discussions.
We also thank the referee for many helpful suggestions.
Part of this work was done while the authors were visiting the
Institute for Computational and Experimental Research in Mathematics
(ICERM); we would like to thank the ICERM for their hospitality.
The second author was supported by the Swedish Research Council.

\section{Toric varieties}\label{sec:toric}

A complex toric variety is a (partial) compactification of the torus $T\cong (\C^*)^m$, which contains $T$ as a dense subset and which admits an action of $T$ that extends the natural action of $T$ on itself. We briefly recall some of the basic definitions, referring to~\cite{Fu} and~\cite{O} for details.

\subsection{Fans and toric varieties}\label{fansoch}

Let $N$ be a lattice isomorphic to $\Z^m$ and let $M=\text{Hom}(N,\Z)$
denote the dual lattice. Set $N_\Q:=N\otimes_\Z \Q$,
$N_\R:=N\otimes_\Z \R$, and define $M_\Q$ and $M_\R$ analogously. Let $\R_+$ and $\R_-$ denote the sets of
non-negative and non-positive numbers, respectively.

A \emph{cone} $\sigma$ in $N_\R$ is a set that is closed under positive scaling. If $\sigma$ is convex and does not contain any line in $N_\R$, it is said to be \emph{strictly convex}. If $\sigma$ is of the form $\sigma=\sum\R_+v_i$ for some $v_i\in N$,
we say that $\sigma$ is a \emph{convex rational cone} \emph{generated}
by the vectors $v_i$. A \emph{face} of $\sigma$ is the intersection of
$\sigma$ and a \emph{supporting hyperplane},
i.e., a hyperplane through the origin such that the whole cone
$\sigma$ is contained
in one of the closed half-spaces determined by the hyperplane.
The \emph{dimension} of $\sigma$
is the dimension of the linear space $\R\sigma$ spanned by
$\sigma$. One-dimensional faces of $\sigma$ are called \emph{edges}
and one-dimensional cones are called \emph{rays}. Given a ray
$\sigma$, the associated \emph{primitive vector} is the first lattice
point met along $\sigma$. The \emph{multiplicity}
$\text{mult}(\sigma)$ of $\sigma$ is the index of the lattice generated by the
primitive elements of the edges of $\sigma$ in the lattice generated
by $\sigma$.
A $k$-dimensional cone is \emph{simplicial}
if it can be generated by $k$ vectors.
A cone is \emph{regular} if
it is simplicial and of multiplicity one.

A \emph{fan} $\Delta$ in $N$ is a finite collection of rational
strongly convex cones in $N_\R$ such that each face of a cone in
$\Delta$ is also a cone in $\Delta$ and, moreover, the intersection of
two cones in $\Delta$ is a face of both of them. Let $\Delta_k$ denote
the set of cones in $\Delta$ of dimension $k$.
The fan $\Delta$ is
said to be \emph{complete} if  the union of all cones in $\Delta$
equals $N_\R$.
If all cones in $\Delta$ are simplicial then $\Delta$ is said to be
\emph{simplicial}, and if all cones are regular, $\Delta$ is said to
be \emph{regular}.
A fan $\widetilde\Delta$ is a \emph{refinement} of $\Delta$ if each cone in $\Delta$ is a union of cones in $\widetilde\Delta$.

A fan $\Delta$ determines a toric variety $X(\Delta)$ obtained by patching together affine
toric varieties $U_\sigma$ corresponding to the cones
$\sigma\in\Delta$.
It is compact if and only if $\Delta$ is
complete. Toric varieties are normal and Cohen-Macaulay.
The variety $X(\Delta)$ is nonsingular if and only if $\Delta$
is regular. $X(\Delta)$ has at worst quotient singularities, i.e.,
it is locally the quotient of a smooth variety by the action of a
finite group, if and only if
$\Delta$ is simplicial, see e.g. 
\cite[Section 2.2]{Fu}.
In this case, we will also say that the variety $X(\Delta)$ is {\em simplicial}.
For any fan $\Delta$ in $N$ there is a fan $\widetilde \Delta$ such
that $X(\widetilde \Delta)\to X(\Delta)$ is a resolution of
singularities.

\subsection{Cohomology of toric varieties and piecewise linear
  functions}\label{coho}

Let $\Delta$ be a simplicial complete fan. Then the odd cohomology
groups 
of $X:=X(\Delta)$ vanish and the even
cohomology groups are generated by varieties invariant under the action of $T$. More precisely $H^{2k}(X):=H^{2k}(X; \R)$ is
generated by $T$-invariant varieties of codimension $k$.
There is a Hodge
decomposition $H^k(X)\otimes_\R \C=\bigoplus_{p+q=k}H^{p,q}(X)$ of the cohomology
groups of $X$ and, moreover, $H^{p,q}(X)=0$ if $p\neq q$, see,
e.g., \cite[Proposition~12.11]{Da} and \cite[Chapter~2.5]{PS}.
In particular,
\[
H^{2k}(X)=H^{k,k}(X;\R):=H^{k,k}(X)\cap H^{2k}(X;\R).
\]

Each cone $\sigma\in\Delta_k$ determines an irreducible subvariety $V(\sigma)$ of $X$
of codimension $k$ that is invariant
under the action of $T$.
If we use $[V(\sigma)]$ to denote the class of $V(\sigma)$ in $H^{2k}(X)$, then
the classes $[V(\sigma)]$, as $\sigma$ runs through all cones of codimension $k$,
generate $H^{2k}(X)$. In particular, each ray $\rho$ in $\Delta$ determines a
$T$-invariant prime Weil divisor $D(\rho)$ and these divisors generate
$H^{2}(X)$. 
Since $\Delta$ is simplicial,
$$\frac{1}{\text{mult}(\sigma)}[V(\sigma)]=\prod [D(\rho_i)]$$
in $H^{*} (X)$,
where $\rho_i$ are the edges of $\sigma$, i.e., the
$[D(\rho_i)]$ genererate $H^{*}(X)$ as an algebra.

Let $\PL(\Delta)$ be the set of all continuous functions
$h:\bigcup_{\sigma\in\Delta} \sigma \to \R$ that are
\emph{piecewise linear with respect to $\Delta$}, i.e., for each cone
$\sigma\in\Delta$ there exists $m=m(\sigma)\in M$ with
$h|_\sigma=m$. A function in $\PL(\Delta)$ is said to be
\emph{strictly convex} if it is convex and defined by different elements $m(\sigma)$ for
different cones $\sigma\in\Delta_m$. A compact toric variety $X(\Delta)$ is
projective if and only if there is a strictly convex $h\in
\PL(\Delta)$. We then say that $\Delta$ is \emph{projective}.

Functions in $\PL(\Delta)$ are in one-to-one correspondence with
$T$-invariant Cartier divisors. If $D$ is a $T$-invariant Cartier divisor of the
form $D=\sum a_i D(\rho_i)$, then the corresponding function  $h_D\in
\PL(\Delta)$ is determined by $h_D(v_i)=a_i$ if $v_i$ is a primitive
vector for $\rho_i$. Conversely $h\in\PL(\Delta)$ determines the Cartier divisor $D(h):=\sum h(v_i) D(\rho_i)$.
Given $h_1, h_2\in\PL(\Delta)$, the corresponding divisors are
linearly equivalent if and only if $h_1-h_2$ is linear.
The function $h_D$ is strictly convex if and only if $D$ is ample.

A function $h\in\PL(\Delta)$ determines a non-empty polyhedron
$$
P(h) := \{ m \in M_\R , \,  m \leq h  \} \subset M_\R~;
$$
in particular,
$$
P_D:=P(h_D)=\{m\in M_\R, \, m (v_i)\leq a_i \}.
$$
If $h$ is convex, then $P(h)$ is a compact \emph{lattice polytope} in $M_\R$, i.e.,  it is the convex hull of finitely many points
in the lattice $M$.
 Conversely, if $P\subset M_\R$ is a lattice polytope, then
the function
\begin{equation}\label{hp}
h_P (u) := \sup \{ m(u), \, m \in P \}
\end{equation}
is a piecewise linear convex function on $N_\R$. If
$h_P\in\PL(\Delta)$ then $\Delta$ is said to be \emph{compatible} with
$P$. We write $D_P$ for the corresponding divisor on $X(\Delta)$.

\subsection{Mixed volume and intersection of divisors}\label{mixed}
Given any finite collection of convex compact sets $K_1,\ldots,
K_s\subset M_\R$, we let $K_1+\cdots + K_s$ denote the \emph{Minkowski
  sum}
$$K_1 + \cdots + K_s:=\{x_1+\cdots +x_s \mid x_j\in K_j\},$$
and
for $r\in\R_+$, we write $rK_j:=\{rx\mid x\in
K_j\}$. Let $\Vol$ be the Lebesque measure on $M_\R\cong \R^m$ normalized so that
the parallelepiped $$Q_e:=\Bigl\{\textstyle{\sum_{j=1}^m a_j e_j\mid 0\leq a_j\leq 1}\Bigr\},$$
spanned by a basis $e_1,\ldots, e_m$ of $M$, has volume 1.

A theorem by Minkowski and Steiner asserts that
$\Vol(r_1K_1+\cdots +r_sK_s)$ is a homogeneous polynomial  of degree $m$ in the variables
$r_1,\ldots, r_s\in \R$. In particular, there is a unique expansion:
\begin{equation}\label{minkowski}
\Vol\left (r_1K_1+\cdots +r_sK_s\right )=
\sum_{k_1+\cdots +k_s=m} \binom{m}{k_1, \ldots, k_s}
\Vol\left (K_1[k_1],\ldots, K_s[k_s]\right )\,  r_1^{k_1}\cdots r_s^{k_s};
\end{equation}
the coefficients $\Vol(K_1[k_1],\ldots, K_s[k_s])\in \R$ are called
\emph{mixed volumes}. Here the notation $K_j[k_j]$ refers to the repetition of $K_j$ $k_j$
times.

\begin{ex}\label{rakning}
Pick $u_1, \ldots, u_m\in M_\R$ and let $P_j$ be the line segments
$[0,u_j]\subset M_\R$. Then $r_1P_1+\cdots +r_mP_m$ is the parallelepiped
$Q_{ru}$, where $ru$ denotes the tuple $r_1u_1,\ldots, r_mu_m$, and so
$$\Vol(r_1P_1+\cdots +r_mP_m)= r_1\cdots r_m\Vol(Q_u).$$ Hence
$\Vol(P_1,\ldots,P_m)=\Vol(Q_u)/m!$. Note that $\Vol(Q_u)$ is strictly
positive if and only if the $u_j$ are linearly independent.
\end{ex}

If $\Delta$ is compatible with $P_1,\ldots, P_s$, then the
intersection product (i.e., the cup product for cohomology classes) of the corresponding divisor classes equals
\begin{equation}\label{snitt}
[D_{P_1}]^{k_1}\cdots[D_{P_s}]^{k_s}= m!\Vol (P_1[k_1],\ldots, P_s[k_s])
\end{equation}
if $k_1+\cdots +k_s=m$,
see \cite[p.~79]{O}.

\section{Monomial maps}\label{maps}
Given a group homomorphism $A: M\to M$, we will write $A$ also for the induced
linear maps $M_\Q\to M_\Q$ and $M_\R\to M_\R$. Moreover, we let $\Atrans$
denote the
dual map $N\to N$, as well as the dual linear maps $N_\Q\to N_\Q$ and
$N_\R\to N_\R$.
It turns out to be convenient to use this notation rather than
writing $\A$ for the map on $N$ and $\Atrans$ for the map on $M$.

Let $\Delta$ be a fan in $N\cong \Z^m$. Then any group
homomorphism $\Atrans : N \to N$ gives rise to a rational map $f_A:
X(\Delta)\dashrightarrow X(\Delta)$, which is equivariant with
respect to the action of $T$.
Let $e_1,\ldots , e_m$ be a basis of $M$ and let $e_1^*,\ldots, e_m^*$
be the dual basis of $N$. Then the dual map $A:M\to M$ is of the form
$A=\sum a_{ij} e_i\otimes e_j^*$ for some $a_{ij}\in\Z$. If
$z_1,\ldots,z_m$ are the induced coordinates on $T$, then $f_A$ is the
monomial map
$$f_A(z_1,\ldots, z_m)=(z_1^{a_{11}}\cdots z_m^{a_{m1}}, \ldots,
z_1^{a_{1m}}\cdots z_m^{a_{mm}})$$ restricted to $T$.
Conversely, any rational, equivariant map $f: X(\Delta)\dashrightarrow
X(\Delta)$ comes from a group homomorphism $N\to N$, see~\cite[p.19]{O}.

The map $f_A: X(\Delta)\dashrightarrow X(\Delta)$ is holomorphic precisely if $\Atrans:N_\R\to N_\R$
satisfies that for each $\sigma\in \Delta$ there is a $\sigma'\in\Delta$, such that $\Atrans(\sigma)\subseteq \sigma'$.
Then
$f_A^*[D(h)]=[D(h\circ \Atrans)]$, see, e.g., \cite[Chapter~6,
  Exercise~8]{M}, and, moreover,
$P(h\circ\Atrans)=A P(h)$.
Given a fan $\Delta$ and a group homomorphism $\Atrans: N\to N$ one can find a
regular refinement $\widetilde
\Delta$ of $\Delta$ such that the induced equivariant map $\tilde f_A:X(\widetilde \Delta)\to
X(\Delta)$ is holomorphic.
We denote by $\pi$ the modification $ X(\widetilde\Delta)\to
X(\Delta)$ induced by the identity map $\id:N\to
N$.
Furthermore, we have the relation $\tilde f_A = f_A \circ \pi$, i.e.,
the following diagram commutes.
\[
\xymatrix{
& X(\widetilde \Delta)\ar[ld]_{\pi}\ar[rd]^{\tilde{f}_A}\\
X(\Delta)\ar@{-->}[rr]_{f_A} & & X(\Delta)
}
\]

%
%
%
%
%
Now the pullback of a $T$-invariant Cartier divisor $D$ under
$f_A: X(\Delta)\dashrightarrow X(\Delta)$ is defined as $f_A^* D= \pi_*\tilde f_A^* D$;
in fact, this definition does not depend on the particular choice of
$\widetilde \Delta$. The divisor $f_A^*D$ is in general only
$\Q$-Cartier, cf. \cite[Chapter~3.3]{Fu}.
Note that, since $H^*(X(\Delta))$ is generated (as an algebra) by Cartier divisors,
$f_A$ induces an action $f_A^*$ on $H^*(X(\Delta))$.

\section{An important example}\label{skew}
We will prove Theorems A and B by constructing toric varieties of a
certain type.
Throughout this paper we let $I=\{i_1,\ldots, i_k\}$ and
$J=\{j_1,\ldots, j_\ell\}$ be strictly increasing multi-indices in
$\{1,\ldots, m\}$.  If $|I|=|J|=k$, we let
$B_{IJ}$ denote the minor corresponding to the sub-matrix of $B$ with
rows $i_1,\ldots, i_k$ and columns $j_1,\ldots, j_k$. Moreover, we write $[\ell]$ for the multi-index
$\{1,\ldots ,\ell\}$ and $I^C$ for the complement $[m]\setminus  I$ of $I$.


Pick linearly independent vectors $v_1, \ldots, v_m\in N_\Q$
and let
$\Delta$ be the fan
\[
\Delta=\Bigl\{\textstyle\sum_{j=1}^m\R_+\varepsilon_jv_j\Bigr\}_
{\varepsilon=(\varepsilon_1,\ldots,\varepsilon_m)\in\{0,-1,+1\}^m}.
\]
In particular, the rays of $\Delta$ are of the form $\R_+ v_j$ and $\R_- v_j$. For
simplicity we will assume that $v_j$ is the primitive vector of the
ray $\R_+v_j$ for each $j$. Note that $\Delta$ is complete and simplicial and that there are strictly
convex functions in $\PL(\Delta)$; hence the resulting toric variety
$X(\Delta)$ is projective and has at worst quotient singularities.
If the
$v_j$ form a basis of $N$, then $X(\Delta)$ is isomorphic to $(\P^1)^m$. In
general we will think of $X=X(\Delta)$ as a ``skew product'' of
$\P^1$s.

Note that the rays of $\Delta$ determine divisors
$D_j:=D(\R_- v_j)$ and $E_j:=D(\R_+ v_j)$, such that $E_j$ is linearly
equivalent to
$D_j$ for each $j$.
The polytope $P_j:=P_{D_j}$ associated to the divisor $D_j$ is the line segment in
$M_\R$ with the origin and $u_j\in M_\R$ as endpoints,
where $u_j$ is the point in $M_\R$ such that $\langle v_i, u_j\rangle= \delta_{ij}$ (Kronecker's delta).
Notice that the $u_j$, as vectors, are linearly independent.
By \cite[Section~5.2]{Fu} $H^{2k}(X)$ will be generated by the intersection (cup) product of divisor classes:
$$[D_I]:=[D_{i_1}]\cdots [D_{i_k}]$$ for $I=\{i_1, \ldots,
i_k\}\subseteq [m]$. 
In particular
\[
f_A^*[D_I]=\sum_{|J|=k}\alpha_{I J} [D_J]
\]
for some
coefficients $\alpha_{I J}$.
Recall that $I^C=[m]\setminus I$, thus
from \eqref{snitt} we get

\begin{equation}
[D_I]\cdot[D_{J^C}]= \left \{
\begin{array}{cl}
m!\Vol(P_1,\ldots, P_m)>0  & \text{if } J=I\\
0  & \text{otherwise }
\end{array} \right. ,
\end{equation}
cf. Example ~\ref{rakning}.
It follows that
\[
f_A^*[D_I]\cdot [D_{J^C}]= \alpha_{IJ} \cdot m!\Vol(P_1,\ldots, P_m)
\]
On the other hand, for $I=\{i_1, \ldots, i_k\}$ and $J^C=\{j_1,\ldots, j_{m-k}\}$,
by the projection formula \cite[p.325]{FuInt}, we have
\begin{multline*}\label{lala}
f_A^*[D_I]\cdot [D_{J^C}] = \pi_* \tilde f_A^*[D_I] \cdot [D_{J^C}] =
\tilde f_A^*[D_I]\cdot \pi^*[D_{J^C}] = \\
\tilde f_A^*\big ([D_{i_1}]\cdots [D_{i_k}] \big )\cdot \pi^*\big ([D_{j_1}]\cdots
[D_{j_{m-k}}]\big ) = \\ \tilde f_A^*[D_{i_1}]\cdots \tilde f_A^*[D_{i_k}]\cdot
\pi^*[D_{j_1}]\cdots \pi^*[D_{j_{m-k}}],
\end{multline*}
where the last step follows since $\tilde f_A$ and $\pi$ are
holomorphic.
Recall from Section
~\ref{maps} that the polytopes associated to $\tilde f_A^*[D_i]$ and
$\pi^*[D_j]$ are $AP_i$ and $\id P_j=P_j$, respectively. Thus in light
of \eqref{snitt},
\begin{multline*}
\tilde f_A^*[D_{i_1}]\cdots \tilde f_A^*[D_{i_k}]\cdot
\pi^*[D_{j_1}]\cdots \pi^*[D_{j_{m-k}}] =
m! \Vol(AP_{i_1}, \ldots, AP_{i_k}, P_{j_1}, \ldots, P_{j_{m-k}}).
\end{multline*}
%
Let $A_{IJ}=A_{IJ}(u_j)$ denote the minors of $A:M_\R\to M_\R$
with respect to the basis $u_1,\ldots, u_m$. Then, in light of Example
~\ref{rakning},
\[
\Vol(AP_{i_1}, \ldots, AP_{i_k}, P_{j_1}, \ldots,
P_{j_{m-k}})= | A_{IJ}| \Vol(P_{1}, \ldots, P_{m}).
\]

To conclude,  $\alpha_{IJ}=|A_{IJ}|$, and thus we have proved the following result.

\begin{lma}\label{pullback}
Let $\Delta$ be a fan of the form $\Delta=\{\sum_{j=1}^m\R_+\varepsilon_jv_j\}_{\varepsilon\in\{0,-1,+1\}^m}.$ Using
  the notation above,
\begin{equation*}
f_A^*[D_I]=\sum_{|J|=k}| A_{IJ}|[D_J].
\end{equation*}
\end{lma}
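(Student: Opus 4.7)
The plan is to read off the coefficient $\alpha_{IJ}$ in the expansion
$f_A^*[D_I]=\sum_{|J|=k}\alpha_{IJ}[D_J]$ by pairing both sides with a suitable dual class. Since the $[D_J]$ for $|J|=k$ generate $H^{2k}(X)$ and the $[D_{J^C}]$ for $|J|=k$ generate $H^{2(m-k)}(X)$, we just need to exhibit a nondegenerate pairing between these bases. Here the polytope associated to $D_j$ is the segment $P_j=[0,u_j]$, and by Example~\ref{rakning} together with \eqref{snitt},
$[D_I]\cdot[D_{J^C}]$ vanishes unless $J=I$ (otherwise we repeat a segment and get a degenerate parallelepiped, so zero mixed volume), while $[D_I]\cdot[D_{I^C}]=m!\Vol(P_1,\ldots,P_m)>0$. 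This identifies $\alpha_{IJ}$ as the quotient $f_A^*[D_I]\cdot[D_{J^C}]/(m!\Vol(P_1,\ldots,P_m))$.

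Next I would compute $f_A^*[D_I]\cdot[D_{J^C}]$ geometrically. Passing to a regular refinement $\widetilde\Delta$ of $\Delta$ on which $\tilde f_A:X(\widetilde\Delta)\to X(\Delta)$ is holomorphic and letting $\pi:X(\widetilde\Delta)\to X(\Delta)$ be the induced birational morphism, the definition $f_A^*D=\pi_*\tilde f_A^*D$ together with the projection formula gives
\[
f_A^*[D_I]\cdot[D_{J^C}] = \tilde f_A^*[D_I]\cdot\pi^*[D_{J^C}].
\]
Because $\tilde f_A$ and $\pi$ are holomorphic, pulling back a cup product equals cupping the pullbacks, so this splits as a product of pullbacks of the individual divisors $D_{i_s}$ and $D_{j_t}$. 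The corresponding polytopes, as recalled in Section~\ref{maps}, are $AP_{i_s}$ and $P_{j_t}$, so \eqref{snitt} rewrites the intersection number as $m!\,\Vol(AP_{i_1},\ldots,AP_{i_k},P_{j_1},\ldots,P_{j_{m-k}})$.

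The last step, which is where the minor appears, is the computation
\[
\Vol(AP_{i_1},\ldots,AP_{i_k},P_{j_1},\ldots,P_{j_{m-k}}) = |A_{IJ}|\,\Vol(P_1,\ldots,P_m).
\]
The image $AP_{i_s}$ is the segment $[0,Au_{i_s}]$. By Example~\ref{rakning}, the mixed volume of $m$ segments $[0,w_1],\ldots,[0,w_m]$ equals $\Vol(Q_w)/m!$, where $Q_w$ is the parallelepiped spanned by the $w_\ell$; and $\Vol(Q_w)$ is $|{\det}_u(w_1,\ldots,w_m)|\cdot\Vol(Q_u)$, where the determinant is taken with respect to the basis $u_1,\ldots,u_m$. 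Expanding $\det_u(Au_{i_1},\ldots,Au_{i_k},u_{j_1},\ldots,u_{j_{m-k}})$ by Laplace expansion along the $k$ columns coming from the $Au_{i_s}$ (using that the remaining columns $u_{j_t}$ are standard basis vectors in the $u$-basis) leaves, up to a sign from reordering, precisely the minor $A_{IJ}$. Taking absolute values yields $|A_{IJ}|$, and dividing by $m!\Vol(P_1,\ldots,P_m)$ as above gives $\alpha_{IJ}=|A_{IJ}|$.

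The only step that needs any real care is the linear algebra identification of the absolute-value minor in that last Laplace expansion, in particular keeping track of the sign arising from shuffling the $k$ columns indexed by $I$ past the $m-k$ columns indexed by $J^C$; fortunately the absolute value sidesteps this bookkeeping.
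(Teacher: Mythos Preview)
Your proposal is correct and follows essentially the same route as the paper: extract the coefficient $\alpha_{IJ}$ via the duality pairing $[D_I]\cdot[D_{J^C}]$ (which is diagonal by \eqref{snitt} and Example~\ref{rakning}), rewrite $f_A^*[D_I]\cdot[D_{J^C}]$ via the projection formula and multiplicativity of holomorphic pullback, and then identify the resulting mixed volume of segments with $|A_{IJ}|\Vol(P_1,\ldots,P_m)$. Your added remark on the Laplace expansion just spells out what the paper leaves implicit in the phrase ``in light of Example~\ref{rakning}''.
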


Hence
\[
(f_A^*)^\ell[D_I]=\sum_{|J_1|=\ldots= |J_{\ell-1}|=|J|=k} | A_{IJ_1}|| A_{J_1J_2}|\cdots| A_{J_{\ell-2}J_{\ell-1}}|| A_{J_{\ell-1}J}|[D_J]
\]
and
\[
(f_A^\ell)^*[D_I]=(f_{A^\ell})^*[D_I]=\sum_{|J|=k} | A^\ell_{IJ}|[D_J],
\]
where $ A^\ell_{IJ}$ denotes the $IJ$-minor of $A^\ell$. Recall the
Cauchy-Binet formula:
\begin{equation}\label{CB}
(AB)_{IJ}=\sum_{|K|=k}  A_{IK} B_{KJ}.
\end{equation}
It follows that a sufficient condition for $(f_A^*)^\ell=(f_A^\ell)^*$ is
that $ A_{IJ}\geq 0$ for all $I,J$ or $ A_{IJ} \leq 0$ for
all $I,J$. Let us summarize this:

\begin{lma}\label{stable}
Let $\Delta$ be a fan of the form
$\Delta=\{\sum_{j=1}^m\R_+\varepsilon_jv_j\}_{\varepsilon\in\{0,-1,+1\}^m}$.
Using the notation above, if $ A_{IJ} \geq 0$ for all strictly
increasing multi-indices
$I=\{i_1,\ldots, i_k\}, J=\{j_1,\ldots, j_k\}\subseteq [m]$ or if  $ A_{IJ}\leq 0$ for all
$I, J$, then $f_A:X(\Delta)\dashrightarrow X(\Delta)$ is $k$-stable.
\end{lma}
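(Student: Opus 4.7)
The plan is to put together the ingredients assembled just above the statement and invoke Cauchy--Binet. Since $H^{2k}(X)$ is generated (as an abelian group) by the classes $[D_I]$, $|I|=k$, it suffices to verify $(f_A^*)^\ell[D_I]=(f_A^\ell)^*[D_I]$ for every multi-index $I$.

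First I would apply Lemma~\ref{pullback} iteratively to the left-hand side. Writing
\[
(f_A^*)^\ell[D_I]=\sum_{|J|=k}\Bigl(\sum_{|J_1|=\cdots=|J_{\ell-1}|=k}|A_{IJ_1}|\,|A_{J_1J_2}|\cdots|A_{J_{\ell-1}J}|\Bigr)[D_J].
\]
For the right-hand side I would use $f_A^\ell=f_{A^\ell}$ together with Lemma~\ref{pullback} applied to the matrix $A^\ell$, giving
\[
(f_A^\ell)^*[D_I]=\sum_{|J|=k}|A^\ell_{IJ}|\,[D_J].
\]
Thus the problem is reduced to proving the combinatorial identity
\[
|A^\ell_{IJ}|=\sum_{|J_1|=\cdots=|J_{\ell-1}|=k}|A_{IJ_1}|\,|A_{J_1J_2}|\cdots|A_{J_{\ell-1}J}|
\]
for every $I$ and $J$ of cardinality $k$.

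This is where the sign hypothesis enters. Iterating the Cauchy--Binet formula~\eqref{CB} gives
\[
A^\ell_{IJ}=\sum_{|J_1|=\cdots=|J_{\ell-1}|=k}A_{IJ_1}A_{J_1J_2}\cdots A_{J_{\ell-1}J}.
\]
If all minors $A_{IJ}$ are nonnegative, every summand on the right is nonnegative, so no cancellation occurs and the identity follows by taking absolute values term by term. If instead all $A_{IJ}$ are nonpositive, then each product of $\ell$ of them carries the common sign $(-1)^\ell$, so the sum has sign $(-1)^\ell$ and pulling out this global sign once more yields the same identity.

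The main (and only) subtle point is the Cauchy--Binet bookkeeping, namely realizing that the sign condition is exactly what prevents cancellation between the $A_{IJ_1}\cdots A_{J_{\ell-1}J}$-terms; without it, $|A^\ell_{IJ}|$ is in general strictly smaller than $\sum|A_{IJ_1}|\cdots|A_{J_{\ell-1}J}|$, and $k$-stability fails. Once the identity is established on the generators $[D_I]$, linearity extends the equality $(f_A^*)^\ell=(f_A^\ell)^*$ to all of $H^{2k}(X)$, so $f_A$ is $k$-stable, as required.
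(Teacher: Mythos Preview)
Your proof is correct and follows essentially the same argument as the paper: iterate Lemma~\ref{pullback} to compute $(f_A^*)^\ell[D_I]$ and $(f_A^\ell)^*[D_I]$, then compare them via the Cauchy--Binet formula~\eqref{CB}, noting that the uniform sign hypothesis on the minors prevents cancellation. The only minor slip is calling the $[D_I]$ generators ``as an abelian group''; they generate $H^{2k}(X)$ as a real vector space, but this does not affect the argument.
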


\begin{remark}\label{bakat}
One can also construct the fan $\Delta$ above starting from a basis
$\epsilon_1,\ldots, \epsilon_m$ of
$M_\Q$.
For $j=1,\ldots, m$, let $V_j$
be the one-dimensional subspace of $N_\Q$ defined by
$$V_j=\{v\in N_\Q\mid \epsilon_\ell(v)=0 \text{ for } \ell \neq j\}.$$
Then each $V_j$ determines two rational rays in $N_\R$, which will be the
rays of $\Delta$; more precisely, pick $v_j$ to be a primitive vector
of one of the rays in $V_j$ and construct $\Delta$ as above.
Now the polytopes $P_{D_j}$ and
$P_{E_j}$ will lie in the one-dimensional vector space spanned by
$\epsilon_j$. By the choice of $v_j$ we can arrange so that
$u_j$ is a positive multiple of $\epsilon_j$.
Then the minor $A_{IJ}$ of $A$ in the basis $u_j$ is just
a positive constant times the $IJ$-minor $A_{IJ}(\epsilon_j)$ of $A$
in the basis $\epsilon_j$.
More precisely, if
$\epsilon_j=\alpha_j u_j$, then $$A_{IJ}=\frac{\alpha_{i_1}\cdots
  \alpha_{i_k}}{\alpha_{j_1}\cdots \alpha_{j_k}} A_{IJ}(\epsilon_j).$$
\end{remark}

\begin{ex}\label{projective}
Consider the monomial map
\[
f_A(z_1,\ldots, z_m)=
(z_1^{a_{11}}\cdots z_m^{a_{m1}}, \ldots, z_1^{a_{1m}}\cdots
z_m^{a_{mm}}).
\]
If all $k\times k$-minors of the matrix $(a_{ij})$ are
either nonnegative (or nonpositive), then $f_A:(\P^1)^n\dashrightarrow
(\P^1)^n$ is $k$-stable. In particular, if $(a_{ij})$ is totally positive (or
totally negative) $f_A$ is stable on $(\P^1)^n$ for all $k$.
Indeed, since $(a_{ij})$ is the matrix of the group homomorphism $A:M\to
M$ associated with $f_A$
with respect to the basis $e_j$ of $M$, cf.\ Section ~\ref{maps},  Lemma ~\ref{stable}
implies that $f_A$ is stable on
$$X \Big (\Big \{\sum_{j=1}^m\R_+\varepsilon_je_j^* \Big
\}_{\varepsilon\in\{0,-1,+1\}^m}\Big
) = (\P^1)^n.$$

\end{ex}

\section{Proof of Theorem~A}\label{proofA}

Given a basis $\xi_1, \ldots, \xi_m$ of $M_\R$, and a linear transformation
$A$,
let $A(\xi_j)$ denote the
matrix of $A$ with respect to this basis.

Assume that $A$ has distinct positive
eigenvalues $\mu_1> \ldots > \mu_m>0$. Then so has the matrix $A(\xi_j)$,
for any basis $\xi_1,\ldots,\xi_m$ of $M_\R$. By \cite{BJ},
one can find a strictly totally positive matrix $A^+$ with eigenvalues
$\mu_1,\ldots,\mu_m$. Since both matrices $A(\xi_j)$ and $A^+$
are diagonalizable over $\R$ and
they have the same set of eigenvalues, it follows that they are
conjugate to each other over $\R$.
Thus, without loss of generality, we can perform a change of basis and assume that
$A(\xi_j)=A^+$.

%
%
The coefficients and the minors of the matrix $A(\xi_j)$
change continuously as one perturbs the basis $\xi_j$. Moreover, being strictly totally positive is an open condition on the
space of matrices.
Hence, by perturbing $\xi_j$, we can find a basis
$\epsilon_1,\ldots, \epsilon_m$ of $M_\Q$ such that $A(\epsilon_j)$ is strictly totally
positive.
Given this basis, following Remark~\ref{bakat}, we construct a fan of the form
\[
\Delta=\Bigl\{\textstyle\sum_{j=1}^m\R_+\varepsilon_j v_j\Bigr\}_{\varepsilon\in\{0,-1,+1\}^m}.
\]
In view of Remark \ref{bakat}, using the notation of
Section~\ref{skew}, all
$k\times k$-minors $A_{IJ}$ in the basis $u_j$ are then positive for $k=1,\ldots,
m-1$, and thus Lemma
~\ref{stable} asserts that $f_A:X(\Delta)\dashrightarrow X(\Delta)$ is $k$-stable for $k=1,\ldots, m-1$.

If $A$ has negative and distinct eigenvalues, by arguments as above,
we can find a basis of $M_\Q$ so that the matrix of $A$ is of the form
$-B$, where $B$ is (strictly) totally positive. Constructing $\Delta$
as above, the $k\times k$-minors of $A$ in the basis $u_j$ will then all have
sign $(-1)^k$ and so, by Lemma ~\ref{stable},
$f_A:X(\Delta)\dashrightarrow X(\Delta)$ is $k$-stable for
$k=1,\ldots, m-1$.

\section{Proof of Theorem~B}\label{proofB}
Given vectors $w_1,\ldots, w_m\in M_\R$ we will write
$w_I=w_{i_1}\wedge\cdots \wedge w_{i_k}$ for $I=\{i_1,\ldots,
i_k\}\subseteq [m]$. 
Note that if $A:M_\R\to M_\R$ is a linear map with eigenvalues $\mu_1,\ldots,
\mu_m$, then the induced linear map $\Lambda^k A: \Lambda^k
M_\R\to \Lambda^k M_\R$ has eigenvalues $\mu_I:=\mu_{i_1}\cdots
\mu_{i_k}$ for $I=\{i_1,\ldots, i_k\}\subseteq [m]$. Throughout we will
assume that the
eigenvalues of $A$ are ordered so that $|\mu_1|\geq \ldots \geq |\mu_m|$.

\begin{lma}\label{orthant}
Given a basis $\rho_1,\ldots,\rho_m$ of $M_\R$, there is a basis
$\epsilon_1,\ldots, \epsilon_m$ of $M_\Q$, such that for $k=1,\ldots, m$, $\rho_{[k]}$
lies in the interior of the first orthant $\sigma_k := \sum_{|I|=k}\R_+ \epsilon_I \subset
\Lambda^k M_\R$, and, moreover, the hyperplane $H_k\subset\Lambda^k M_\R$, spanned by $\rho_I$, $I\neq [k]$, intersects $\sigma_k$ only at the origin.
\end{lma}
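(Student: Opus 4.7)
The plan is to reformulate both conditions as sign conditions on minors of the change-of-basis matrix, exhibit an explicit real matrix (depending on a large parameter) satisfying them, and then pass to a rational basis by continuity.

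First I would translate the conditions into matrix form. Let $A = (a_{ij})$ be defined by $\rho_j = \sum_i a_{ij}\,\epsilon_i$. Expanding $\rho_{[k]} = \rho_1 \wedge \cdots \wedge \rho_k$ in the wedge basis gives $\rho_{[k]} = \sum_{|J|=k} A_{J,[k]}\,\epsilon_J$, so the first condition is equivalent to $A_{J,[k]} > 0$ for every $|J|=k$. For the second, since $\{\rho_I\}_{|I|=k}$ is a basis of $\Lambda^k M_\R$, $H_k$ is a hyperplane, and $H_k \cap \sigma_k = \{0\}$ is equivalent to the unique (up to scaling) linear functional vanishing on $H_k$ --- the ``$\rho_{[k]}$-coordinate'' in the $\rho_I$-basis --- being strictly positive on every generator $\epsilon_J$ of $\sigma_k$. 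Since $\epsilon_J = \sum_I (A^{-1})_{I,J}\,\rho_I$, this value equals $(A^{-1})_{[k],J}$. Thus the lemma reduces to finding a basis of $M_\Q$ whose matrix $A$ satisfies $A_{J,[k]} > 0$ and $(A^{-1})_{[k],J} > 0$ for every $k = 1,\ldots,m$ and every $|J|=k$.

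Second, I would take the ``signed Vandermonde'' $a_{ij} = (-1)^{\max(0,\,j-i)}\,c^{(j-1)(i-1)}$ (so $x_j = c^{j-1}$) for a large positive real $c$. By Jacobi's complementary minor identity
$$(A^{-1})_{[k],J} = (-1)^{s([k]) + s(J)}\,A_{J^c,[k]^c}\,/\,\det A, \qquad s(I) := \sum_{i \in I} i,$$
condition (b) translates into a sign condition on the minors $A_{J^c,[k]^c}$. Each such minor is a polynomial in $c$, and by the strict rearrangement inequality its leading-in-$c$ term comes uniquely from the identity permutation. A short calculation shows that the leading sign of $A_{J,[k]}$ is $+$ (since $j_\alpha \geq \alpha$ in $J = \{j_1 < \cdots < j_k\}$ makes all sign-exponents vanish), while the leading sign of $A_{J^c,[k]^c}$ equals $(-1)^T$ with $T = \sum_\alpha (k+\alpha - j'_\alpha)$ for $J^c = \{j'_1 < \cdots < j'_{m-k}\}$; the bound $j'_\alpha \leq k+\alpha$ (coming from $|J \cap [j'_\alpha]| \leq k$) gives $T \geq 0$, and a bookkeeping calculation identifies $T$ with $s(J) - s([k])$, matching the required sign. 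Hence for $c$ sufficiently large, both (a) and (b) hold. Finally, since both conditions are open and $M_\Q$ is dense in $M_\R$, a small perturbation of $\epsilon^0 = A^{-1}\rho$ yields a basis $\epsilon$ of $M_\Q$ still satisfying the conditions.

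The main technical step is the sign analysis in the second paragraph, in particular checking that the leading-order coefficient of each relevant minor comes from the identity permutation and has the correct sign. The key combinatorial input is the elementary estimate $j'_\alpha \leq k+\alpha$, which ensures $T \geq 0$ (so one can drop the positive-part operator in the sign exponent) and lets one identify the leading sign with $(-1)^{s([k]) + s(J)}$, as required by the Jacobi formula.
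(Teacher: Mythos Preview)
Your proposal is correct and takes a genuinely different route from the paper. The paper introduces an auxiliary linear map $A$ that is diagonal in the basis $\rho_j$ with distinct positive eigenvalues, invokes the Barrett--Johnson result \cite{BJ} to find a rational basis $\epsilon_j$ in which the matrix of $A$ is strictly totally positive, and then applies the Perron--Frobenius theorem to $\Lambda^k A$: the Perron eigenvector $\rho_{[k]}$ must lie in the interior of $\sigma_k$, and the complementary invariant hyperplane $H_k$ meets $\sigma_k$ only at the origin. Your approach instead translates both conclusions into explicit sign conditions on minors of the change-of-basis matrix (namely $A_{J,[k]}>0$ and $(A^{-1})_{[k],J}>0$ for all $J$ and all $k$), exhibits a concrete one-parameter family of real matrices---a signed Vandermonde---satisfying these for large parameter via a leading-term analysis (rearrangement inequality plus Jacobi's complementary minor identity), and then perturbs to $M_\Q$. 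Your bookkeeping checks out: $j_\alpha\ge\alpha$ kills the sign in the first family of minors, while $j'_\alpha\le k+\alpha$ gives $T=\sum_\alpha(k+\alpha-j'_\alpha)=s(J)-s([k])$, matching the sign demanded by Jacobi. What you gain is a self-contained, explicit construction that avoids both \cite{BJ} and Perron--Frobenius; what the paper's argument gains is brevity and a conceptual link to the total-positivity machinery already used in the proof of Theorem~A.
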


\begin{proof}
Pick real numbers $\mu_1>\ldots >\mu_m>0$ and let $A:M_\R\to M_\R$ be a linear map given by a diagonal matrix
in the basis $\rho_j$ with diagonal entries $\mu_1, \ldots, \mu_m$. As in the proof of Theorem~A we can then choose a
basis $\epsilon_1,\ldots, \epsilon_m$ of $M_\Q$ such that $A(\epsilon_j)$ is strictly
totally positive. In particular, for a given $k$, $ A_{IJ}(\epsilon_j)>0$ for all
$I,J$ such that $|I|=|J|=k$, which means that $\Lambda^kA:\Lambda^k M_\R\to \Lambda^k M_\R$ maps the
first orthant $\sigma_k$ into itself. Since the  $\rho_j$ are the
eigenvectors of $A$, it follows by the
Perron-Frobenius theorem
that the eigenvector $\rho_{[k]}$ (or $-\rho_{[k]}$) corresponding to the largest
eigenvalue $\mu_{[k]}$ of $\Lambda^kA$ is contained in
the interior of $\sigma_k$ 
and, moreover, that $H_k\cap\sigma_k$ is the origin.
\end{proof}

To prove Theorem~B, we choose a basis $\rho_j$ such that the linear map $A:M_\R\to M_\R$ is in real Jordan form, i.e., with blocks
\[
\begin{bmatrix}
\mu_j & 1 & &  \\
 & \mu_j & \ddots  & \\
& & \ddots & 1\\
& & & \mu_j
\end{bmatrix}
\text{ and }
\begin{bmatrix}
C_j & I & &  \\
 & C_j & \ddots  & \\
& & \ddots & I\\
& & & C_j
\end{bmatrix},
\text{ where }
C_j=\begin{bmatrix}
a_j & b_j\\
-b_j & a_j
\end{bmatrix}
\]
and $I$ is the $2\times 2$ identity matrix; we have
the first block type for real eigenvalues $\mu_j$ and the second type for
complex eigenvalues $a_j \pm i b_j$. We order the blocks so that
moduli
of the eigenvalues are in decreasing order along the diagonal and
the vectors $\rho_j$ so that $\rho_1$ is an eigenvector corresponding to the largest eigenvalue etc.
Next, we let $\epsilon_1,\ldots, \epsilon_m$ be a basis of $M_\Q$
constructed as in Lemma~\ref{orthant}, and from $\epsilon_j$,
following Remark~\ref{bakat}, we construct a fan $\Delta$ of the
form
\[
\Delta=\Bigl\{\textstyle\sum_{j=1}^m\R_+\varepsilon_j v_j\Bigr\}_{\varepsilon\in\{0,-1,+1\}^m}.
\]

\smallskip

Assume that $|\mu_k|>|\mu_{k+1}|$. Then
$\mu_{[k]}$ is the unique eigenvalue of $\Lambda^k A: \Lambda^k
M_\R\to \Lambda^k M_\R$ of largest
modulus. Since $A$, and thus $\Lambda^kA$, is real, $\mu_{[k]}$ is
real. 
Moreover, $\Lambda^k A \rho_{[k]}= \mu_{[k]} \rho_{[k]}$,
so that $\rho_{[k]}$ spans the one-dimensional eigenspace of
$\mu_{[k]}$. By Lemma ~\ref{orthant} $\rho_{[k]}$ is the unique (up to scaling) eigenvector of
$\Lambda^k A$ that is contained in $\sigma_k$ and the hyperplane in
$\Lambda^k M_\R$ spanned by the other eigenvectors intersects
$\sigma_k$ only at the origin, and thus, since $\mu_{[k]}$ is
the unique eigenvalue of largest modulus, $\sigma_k$ will
get attracted to the eigenspace $\R\rho_{[k]}\subseteq \Lambda^k
M_\R$.
Hence there is an $\ell_k\in\N$, such
that $(\Lambda^k A)^\ell\sigma_k\subset \sigma_k$ or
$$(\Lambda^k A)^\ell\sigma_k
\subset -\sigma_k:=\{x\in M_\R\mid -x\in \sigma_k\}$$ for all $\ell\geq
\ell_k$. In particular, for such an $\ell$, $(\Lambda^k A)^\ell
\epsilon_I \in \sigma_k$ for all $I=\{i_1,\ldots, i_k\}\subseteq [m]$ or $(\Lambda^k A)^\ell \epsilon_I \in -\sigma_k$ for $(\Lambda^k A)^\ell$
all $I$.
This means that the entries of $(\Lambda^k A)^\ell$, i.e., the $k\times k$-minors of $A^\ell$, in the basis $\epsilon_j$ are
either all positive or all negative. In view of Remark~\ref{bakat}, using the notation of
Section~\ref{skew},
this implies that
$A^\ell_{IJ}(u_j)\geq 0$ for all $I=\{i_1, \ldots, i_k\}, J=\{j_1, \ldots,
j_k\}\subseteq [m]$ or $ A^\ell_{IJ}(u_j)\leq 0$ for all $I, J$. Now Lemma
~\ref{stable} asserts that $f_A^\ell:X(\Delta)\dashrightarrow X(\Delta)$ is $k$-stable.

Finally let $\ell_0=\max_j \ell_{k_j}$. Then
$f_A^\ell:X(\Delta)\dashrightarrow X(\Delta)$ is $k_j$-stable for
$\ell\geq \ell_0$ and $j=1,\ldots, s$.

\begin{remark}\label{compare}
For $s=1$ and $k_1=1$ Theorem~B appeared as Theorem~4.7 in
\cite{L1} and Theorem~B' in \cite{JW}. 
Also in these papers the idea of the proof
is to find a basis (of $N_\R$) such that the first orthant is mapped
into itself and then construct a toric variety as in
Section~\ref{skew}.

\end{remark}

\section{Degrees of monomial maps}\label{degreesection}


Let $\Delta$ be a complete simplicial projective fan and let $D$ be an
ample divisor on $X(\Delta)$. Then the \emph{$k$th degree of $f_A:X(\Delta)\dashrightarrow X(\Delta)$ with respect to $D$} is defined as
\[
\deg_{D,k}:=f_A^* D^k\cdot D^{m-k}.
\]
If $X(\Delta)=\P^m$ and $\mathcal O(D)=\mathcal O_{\P^m}(1)$, then
$\deg_{D,k}$ coincides with the $k$th degree $\deg_k$ as defined
in the introduction.

We have the following more general version of Theorem C. Indeed, Theorem~C corresponds to the case when $X(\Delta)=\P^m$ and $\mathcal
O(D)=\mathcal O_{\P^m}(1)$.

\begin{thmC'}
Let $\Delta$ be a complete simplicial fan and let $D$ be an ample
divisor on $X(\Delta)$.
For $1\leq k\leq m-1$ assume that the eigenvalues of $A\in M_m(\Z)$
satisfy 
\begin{equation}\label{ballong}
|\mu_{k-1}|>|\mu_k|=|\mu_{k+1}|>|\mu_{k+2}|
\end{equation}
and moreover that $\mu_k/\mu_{k+1}$ is not a root of unity.
Then the degree sequence $\deg_{D,k}(f_A^n)$ does not satisfy
any linear recurrence.
\end{thmC'}


If $k=1$ or $k=m-1$ the condition \eqref{ballong} on the moduli  of the $\mu_j$ should
be interpreted as $|\mu_1|=|\mu_2|>|\mu_3|$ and
$|\mu_{m-2}|>|\mu_{m-1}|=|\mu_{m}|$, respectively.

\begin{remark}
Note that there exist maps that satisfy the assumption of Theorem
~C'.
For example, choose integers $a_1,\ldots, a_{k-1},
b_1,b_2,a_{k+2},\ldots, a_m$ such that
\[
|a_1|\geq \ldots \geq
|a_{k-1}|>\sqrt{b_1^2+b_2^2} > |a_{k+2}|\geq \ldots \geq |a_m|
\]
 and
$b_1+ib_2=\sqrt{b_1^2+b_2^2} \cdot e^{2\pi i \theta}$, where $\theta\notin\Q$. Then (the matrix of) the monomial map
\[
(z_1,\ldots, z_m)\mapsto (z_1^{a_1},\ldots, z_{k-1}^{a_{k-1}},
z_k^{b_1}z_{k+1}^{b_2}, z_k^{-b_2}z_{k+1}^{b_1}, z_{k+2}^{a_{k+2}},
\ldots, z_m^{a_m})
\]
satisfies the assumption of Theorem ~C'.
\end{remark}

Corollary D follows immediately from Theorem C' and
the following result. This is probably well-known, but we include a
proof for completeness; we follow the proof of Proposition ~3.4 in
\cite{L3}.



%

\begin{prop}\label{stabdeg}
Assume that $\Delta$ is a simplicial projective fan and let $D$ be an ample divisor on
$X(\Delta)$. Assume that the monomial map
$f_A:X(\Delta)\dashrightarrow X(\Delta)$ is $k$-stable. Then
the degree sequence $\deg_{D,k}(f_A^n)$ satisfies a linear recurrence.
\end{prop}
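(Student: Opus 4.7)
The plan is to rewrite the degree sequence as a sequence of values of a linear functional applied to the orbit of $D^k$ under the linear operator $f_A^*$, and then invoke Cayley--Hamilton. Concretely, since $\Delta$ is simplicial, $X=X(\Delta)$ has at worst quotient singularities, and as recalled in Section~\ref{coho} the even cohomology $H^{2k}(X;\R)$ is a finite-dimensional real vector space generated by classes of $T$-invariant subvarieties, with $H^{*}(X)$ generated as an algebra by Cartier divisor classes. Since $D$ is ample, $D^k\in H^{2k}(X;\R)$ and $D^{m-k}\in H^{2(m-k)}(X;\R)$ are well-defined.

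First I would observe that $k$-stability of $f_A$ means precisely that $(f_A^n)^{*}=(f_A^{*})^n$ as endomorphisms of $H^{2k}(X;\R)$ for every $n\in\N$. Setting $T:=f_A^{*}|_{H^{2k}(X;\R)}$, we get
\[
\deg_{D,k}(f_A^n)=(f_A^n)^{*}D^k\cdot D^{m-k}=T^n(D^k)\cdot D^{m-k}.
\]
Thus the sequence $(\deg_{D,k}(f_A^n))_{n\geq 0}$ is the image of the sequence of vectors $(T^n(D^k))_{n\geq 0}$ in $H^{2k}(X;\R)$ under the linear functional $\alpha\mapsto \alpha\cdot D^{m-k}$.

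Next I would apply the Cayley--Hamilton theorem to $T$: since $H^{2k}(X;\R)$ is finite dimensional, there exist constants $c_0,c_1,\ldots,c_{d-1}\in\R$ (the coefficients of the characteristic polynomial of $T$, up to sign) with
\[
T^d+c_{d-1}T^{d-1}+\cdots+c_1 T+c_0\,\id=0.
\]
Applying this identity to the class $D^k$ and then intersecting with $D^{m-k}$ gives, for every $n\geq 0$,
\[
\deg_{D,k}(f_A^{n+d})+c_{d-1}\deg_{D,k}(f_A^{n+d-1})+\cdots+c_0\deg_{D,k}(f_A^n)=0,
\]
which is the desired linear recurrence.

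I do not expect a real obstacle here: once $k$-stability is used to linearize the pullback, the statement reduces to the standard fact that orbits of a linear operator on a finite-dimensional space satisfy a linear recurrence of order at most $\dim H^{2k}(X;\R)$. The only mild point to keep track of is that $f_A^{*}$ is defined on $H^{2k}(X;\R)$ (this uses that $X$ is simplicial, as discussed in Section~\ref{maps}, so that pullbacks of Cartier divisors are well-defined as $\Q$-Cartier classes and $H^{*}(X)$ is generated by divisor classes), and that $D^k$ and $D^{m-k}$ lie in the appropriate cohomology groups, which follows from ampleness of $D$.
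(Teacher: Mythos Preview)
Your proof is correct and takes essentially the same approach as the paper: both use $k$-stability to write $\deg_{D,k}(f_A^n)$ as a linear functional applied to $T^n([D]^k)$ for $T=f_A^*|_{H^{2k}(X)}$, and then invoke Cayley--Hamilton on the finite-dimensional space $H^{2k}(X)$. The only cosmetic difference is that the paper first chooses a basis $[D]^k,\theta_1,\ldots,\theta_r$ of $H^{2k}(X)$ with $\theta_j\cdot[D]^{m-k}=0$, so that the degree becomes $[D]^m$ times the $(1,1)$-entry of the matrix of $f_A^*$, whereas you argue coordinate-free; the content is identical.
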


For the proof we will need the \emph{Caley-Hamilton theorem}:
Let $B\in M_L(\Z)$ and assume that
\[
\chi (r)=r^L+\varphi_{L-1}r^{L-1}+\cdots +\varphi_1 r+ \varphi_0
\]
is the characteristic polynomial of
$B$. Then the Caley-Hamilton theorem asserts that
\[
B^L+\varphi_{L-1}B^{L-1}+\cdots +\varphi_1 B+ \varphi_0 I=0
\]
where $I$ is the identity matrix. In particular, for each $1\leq i,j\leq L$,
the entry $b_{ij}^n=:b_n$ of $B^n$ satisfies the linear recurrence
\begin{equation}\label{caley}
\chi (b_n): b_{n+L}+\varphi_{L-1}b_{n+L-1}+\cdots +\varphi_1
b_{n+1}+\varphi_0 b_n=0.
\end{equation}

\begin{proof}[Proof of Proposition~\ref{stabdeg}]
Since $D$ is ample, the class
$[D]^k$ in $H^{2k}(X)$ is non-zero, where $X=X(\Delta)$, and thus we can extend it to a basis $[D]^k,
\theta_1,\ldots, \theta_r$ for $H^{2k}(X)$ such that $\theta_j\cdot
[D]^{m-k}=0$ for $j=1,\ldots, r$. Note that then $\deg_{D,k}(f_A)$ is
equal to
$[D]^k\cdot[D]^{m-k}=[D]^m$ times
the $(1,1)$-entry of the matrix $B$ of $f_A^*:H^{2k}(X)\to
H^{2k}(X)$ with respect to the
basis $[D]^k,
\theta_1,\ldots, \theta_r$.
Since by assumption $f_A$ is $k$-stable, i.e.,
$(f_A^n)^*=(f_A^*)^n$ for all $n\in\N$, it follows that
$\deg_{D,k}(f_A^n)$ is equal to
$[D]^{m}$ times the $(1,1)$-entry of $B^n$. Therefore by the
Caley-Hamilton theorem $\deg_{D,k}(f_A^n)=:b_n$ satisfies the linear recurrence \eqref{caley}, where $\chi$ is the characteristic equation of $B$.
\end{proof}


Note that Proposition~\ref{stabdeg} implies that if $A$ satisfies
the assumption of Theorem~A, $X(\Delta)$ is the toric variety
constructed in the proof of Theorem~A, and $D$ is an ample divisor on
$X(\Delta)$, then the degree sequence $\deg_{D,k}(f_A^n)$ of
$f_A:X(\Delta)\dashrightarrow X(\Delta)$ satisfies a linear
recurrence. Similarly if $A$ and $X(\Delta)$ are as in the (proof of)
Theorem~B, then $\deg_{D,k}(f_A^{\ell n})$ satisfies a linear
recurrence for $\ell \geq \ell_0$.

Moreover, even if $f_A$ is not $k$-stable, as long as we can
lift it to a $k$-stable map, we still have a linear recurrence for its degree sequence.

\begin{prop}
\label{lift_stab_deg}
Suppose that $X$ is a simplicial projective toric variety, and that $\pi: \widetilde
X\to X$ is a nonsingular projective modification of $X$ such that
$f_A:X\dashrightarrow X$ lifts to a $k$-stable map
$f_A:\widetilde X\dashrightarrow \widetilde X$.
Then, for any ample divisor $D$ on $X$, the degree
sequence $\deg_{D,k}(f_A^n)$ satisfies a linear recurrence.
\end{prop}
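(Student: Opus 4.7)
The plan is to transfer the degree computation to $\widetilde X$, where the $k$-stability hypothesis lets us carry out a Cayley-Hamilton argument in the spirit of Proposition~\ref{stabdeg}.

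Set $\widetilde D:=\pi^*D$. The first step is to establish the identity
\[
\deg_{D,k}(f_A^n)=(f_A^n)^*\widetilde D^k\cdot \widetilde D^{m-k},
\]
where the left-hand side is computed on $X$ and the right-hand side is an intersection number on $\widetilde X$, with $(f_A^n)^*$ now denoting the pullback of the lift on $\widetilde X$. To see this, choose a proper birational $\tau_n\colon V_n\to\widetilde X$ on which $f_A^n\circ\tau_n$ becomes a holomorphic map $g_n\colon V_n\to\widetilde X$. Then $\pi\circ g_n=f_A^n\circ(\pi\circ\tau_n)$, so the pair $(\pi\circ\tau_n,\pi\circ g_n)$ resolves $f_A^n\colon X\dashrightarrow X$, and a direct application of the projection formula for $\pi$ gives $(f_A^n)^*D^k=\pi_*((f_A^n)^*\widetilde D^k)$ in $H^{2k}(X)$. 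Pairing with $D^{m-k}$ and applying the projection formula once more produces the claimed identity.

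Once this identity is in place, $k$-stability of the lift gives $(f_A^n)^*=(f_A^*)^n$ as endomorphisms of $H^{2k}(\widetilde X)$, so
\[
\deg_{D,k}(f_A^n)=(f_A^*)^n\widetilde D^k\cdot \widetilde D^{m-k}.
\]
Let $\chi(t)=t^L+\varphi_{L-1}t^{L-1}+\cdots+\varphi_0$ be the characteristic polynomial of the linear operator $f_A^*\colon H^{2k}(\widetilde X)\to H^{2k}(\widetilde X)$. By the Cayley-Hamilton theorem $\chi(f_A^*)$ annihilates every class in $H^{2k}(\widetilde X)$; applying it to $(f_A^*)^n\widetilde D^k$ and cupping with $\widetilde D^{m-k}$ yields
\[
\deg_{D,k}(f_A^{n+L})+\varphi_{L-1}\deg_{D,k}(f_A^{n+L-1})+\cdots+\varphi_0\deg_{D,k}(f_A^n)=0
\]
for every $n\ge 0$, the desired linear recurrence.

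The main technical point is the first step: since $(f_A^n)^*$ for a rational self-map is defined only through a resolution, one has to verify that the two pullbacks on $X$ and on $\widetilde X$ really are related by $\pi_*$ as claimed. The key observation is that any resolution of the lift $f_A^n\colon\widetilde X\dashrightarrow\widetilde X$ can be post-composed with $\pi$ to give a resolution of $f_A^n\colon X\dashrightarrow X$, so both pullbacks can be computed on the same model $V_n$, after which the projection formula does all the work. No ampleness of $\widetilde D$ is needed; $D$ is used to be ample on $X$ only to the extent that $\deg_{D,k}$ is then defined in the sense of Section~\ref{degreesection}.
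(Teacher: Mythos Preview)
Your proof is correct and follows essentially the same strategy as the paper's: transfer the degree computation to $\widetilde X$ via $\pi^*$ and then run Cayley--Hamilton on $f_A^*\colon H^{2k}(\widetilde X)\to H^{2k}(\widetilde X)$. The paper's version is terser---it simply extends $\pi^*[D]^k$ to a basis of $H^{2k}(\widetilde X)$ with the remaining elements orthogonal to $\pi^*[D]^{m-k}$, so that $\deg_{D,k}(f_A^n)$ becomes $\pi^*[D]^m$ times the $(1,1)$-entry of the matrix of $(f_A^*)^n$, and then quotes Proposition~\ref{stabdeg}. Your write-up is more explicit about why the degree on $X$ agrees with the intersection number on $\widetilde X$ (the projection-formula argument through a common resolution), and your direct application of Cayley--Hamilton to the class $(f_A^*)^n\widetilde D^k$ sidesteps the need to check $\pi^*[D]^k\neq 0$ that the basis-extension route requires; otherwise the two arguments are the same.
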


\begin{proof}
Since $D$ is ample,
$\pi^*([D]^k)$ is nonzero. As in the proof of Proposition~\ref{stabdeg},
we can extend it to a basis of $H^{2k}(\widetilde X)$ in such a way that
$\deg_{D,k}(f_A)$ is equal to $\pi^*([D]^m)$ times the $(1,1)$-entry of the matrix $B$ of $f^*_A$.
Thus, again as in the proof of Proposition~\ref{stabdeg}, $\deg_{D,k}(f_A^n)$ satisfies the linear
recurrence given by the characteristic equation of $B$.
\end{proof}

It follows from Theorem~C' and Proposition~\ref{lift_stab_deg} that if $A$ satisfies the assumption of Theorem~C,
one cannot $k$-stabilize $f_A$ by blowing up $\P^m$ or any other simplicial
projective toric variety.

\subsection{Computing $\deg_{D,k} (f_A)$}

To prove Theorem C' we will express $\deg_{D,k}(f_A)$ in terms of the $k\times k$-minors of $A$.
First, for a $T$-invariant divisor $D$,
Proposition~4.1 in \cite{FW} says that $\deg_{D,k}(f_A)$ can be computed as a mixed volume:
\begin{equation}\label{favrew}
\deg_{D,k}(f_A)=m! \Vol \bigl(A P_D [k], P_D[m-k]\bigr).
\end{equation}
In the case of a general ample divisor $D'$, notice that the degrees only depend on the cohomology class of $D'$,
and there is always a $T$-invariant divisor $D$ such that $[D]=[D']$.

Next, we will describe a method of computing the mixed volume of
polytopes that we learned from Huber-Sturmfels  \cite{HS}.
A more detail exposition can be found in their paper.

Let $\mathcal P=(P_1,\ldots, P_s)$ be a sequence of polytopes in
$\R^m$ such that $P:=P_1+\cdots + P_s$ has dimension $m$. A
\emph{cell} of $\mathcal P$ is a tuple $\mathcal C = (C_1,\ldots,
C_s)$ of polytopes $C_i\subset P_i$. Let $\# C_i$ be the number of
vertices of $C_i$ and let $C:=C_1+\cdots + C_s$. A \emph{fine mixed subdivision} of $\mathcal P$ is a collection of cells $\mathcal S=\{\mathcal C^{(1)},\ldots, \mathcal C^{(r)}\}$ such that
$C^{(j)}$ has dimension $m$, $$\dim
C_1^{(j)}+\cdots +\dim C_s^{(j)}=m  ~~~~\text{ and } ~~~~
\# (C_1^{(j)})+\cdots + \# (C_s^{(j)})-s=m
$$
 for $j=1,\ldots, r$.
Moreover, $C^{(j)}\cap C^{(j')}$ is a face of both $C^{(j)}$ and $C^{(j')}$ for all $j,j'$, and $\bigcup_j C^{(j)}=P$.
If $\mathcal S$ is a fine mixed subdivsion of $\mathcal P$, then Theorem ~2.4 in \cite{HS} asserts that
\begin{equation}\label{uggla}
\Vol(P_1[k_1], \ldots, P_s[k_s])=
k_1!\cdots k_s! \cdot\sum_{\mathcal C^{(j)}\in\mathcal S, \dim
  C^{(j)}_i=k_i, i=1,\ldots, s}
\Vol(C^{(j)}).
\end{equation}
Moreover, Algorithm ~2.9 in \cite{HS} gives a method of finding fine
mixed subdivisions; in particular, each sequence of polytopes
$\mathcal P$ admits a fine mixed subdivision, where, for each $i,j$, $C^{(j)}_i$ is the convex hull of a subset of the vertices of $P_i$.

We want to apply this method to the right hand side of
\eqref{favrew}.
Assume that $P_D$ has vertices $v_1,\ldots ,v_N$. Then $A P_D$ has
vertices $A v_1,\ldots, A v_N$ and thus we can find a fine mixed
subdivision $\mathcal S$ of $\big (A P_D, P_D\big )$ with cells of
the form
\[
\mathcal C_{IJ}:=\big (\conv (A v_{i_0},\ldots, A v_{i_k}), \conv
(v_{j_0},\ldots,v_{j_{m-k}})\big ),
\]
where
$\conv(v_{i_0},\ldots,v_{i_{k}})$ denotes the convex hull of $v_{i_0},\ldots,v_{i_{k}}$, for some $I=\{i_0,\ldots, i_k\}$
and $J=\{j_0,\ldots, j_{m-k}\}\subset [N]$. Let $\mathcal S_k\subset \mathcal S$ be the set of cells $\mathcal C_{IJ}$, where $|I|=k+1$. Then \eqref{uggla} gives
\[
\Vol(A P_D[k], P_D[m-k])=
k! (m-k)!
\sum_{\mathcal C_{IJ}\in \mathcal S_k}
\Vol(C_{IJ}).
\]

Note that $C_{IJ}$ is the Minkowski sum of the $k$-simplex $\conv (A
v_{i_0},\ldots, A v_{i_k})$ with edges
$A(v_{i_1}-v_{i_0}), \ldots, A(v_{i_k}-v_{i_0})$ and the $(m-k)$-simplex $\conv
(v_{j_0},\ldots,v_{j_{m-k}})$ with edges $v_{j_1}-v_{j_0},\ldots,
v_{j_{m-k}}-v_{j_0}$.
From now on,
let us fix a basis of $M$.
It follows that
$k!(m-k)!\Vol (C_{IJ})$ equals the modulus of the determinant of the matrix $B_{IJ}$ with the vectors
$A(v_{i_1}-v_{i_0}), \ldots, A(v_{i_k}-v_{i_0})$ and $v_{j_1}-v_{j_0},\ldots, v_{j_{m-k}}-v_{j_0}$ as columns.
Since $P_D$ is a lattice polytope, the determinant of $B_{IJ}$ is an integer linear combination of
$k\times k$-minors of $A$.
Hence $\deg_{D,k}(f_A)$ is of the form
$\sum \sigma_{IJ}A_{IJ}$ where $\sigma_{IJ}\in\Z$ and $A_{IJ}$ is the $IJ$-minor of $A$.
Observe that the matrix $\sigma$ with entries $\sigma_{IJ}$ only depends on the set of multi-indices $I,J$ such that $C_{IJ}$ is in $\mathcal S_k$ and the sign of the determinant of $B_{IJ}$.
Since there are only finitely many choices of $I,J$ and signs, we conclude the following.

\begin{lma}\label{flyg}

There is a finite set $\Sigma$ of matrices $\sigma=(\sigma_{IJ})\in\Z^{{m\choose k}^2}$, such that for each $A:M\to M$ there is a $\sigma=\sigma(A)\in\Sigma$ such that
\[
\deg_{D,k}(f_A)=\sum_{IJ}\sigma_{IJ} A_{IJ}.
\]

\end{lma}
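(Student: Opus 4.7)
\smallskip
\noindent\textbf{Proof plan for Lemma~\ref{flyg}.}
The plan is to start from the explicit expression for $\deg_{D,k}(f_A)$ derived in the paragraphs preceding the lemma and then argue that the coefficients appearing there can only take finitely many values. Concretely, the discussion above produces, for each $A$, a fine mixed subdivision $\mathcal S$ of $(AP_D, P_D)$ whose $k$-relevant cells are of the form
\[
\mathcal C_{IJ}=\bigl(\conv(Av_{i_0},\ldots, Av_{i_k}),\, \conv(v_{j_0},\ldots, v_{j_{m-k}})\bigr),\qquad I\subset [N],\ J\subset [N],
\]
where $v_1,\ldots,v_N$ are the vertices of $P_D$. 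One then has
\[
\deg_{D,k}(f_A)=\sum_{\mathcal C_{IJ}\in\mathcal S_k} |\det B_{IJ}|,
\]
where $B_{IJ}$ is the matrix whose columns are $A(v_{i_1}-v_{i_0}),\ldots, A(v_{i_k}-v_{i_0}),\, v_{j_1}-v_{j_0},\ldots, v_{j_{m-k}}-v_{j_0}$.

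Next, for each \emph{fixed} pair $(I,J)$ of multi-indices of vertices, I would expand $\det B_{IJ}$ by multilinearity in its first $k$ columns. Since the last $m-k$ columns $v_{j_\ell}-v_{j_0}$ belong to the lattice $M$, and the first $k$ columns are $A$ applied to lattice vectors $v_{i_\ell}-v_{i_0}$, the Cauchy--Binet-type expansion yields
\[
\det B_{IJ}\;=\;\sum_{|I'|=|J'|=k} c^{(I,J)}_{I'J'}\, A_{I'J'},
\]
where the coefficients $c^{(I,J)}_{I'J'}\in\Z$ depend only on the vertices of $P_D$ and on $(I,J)$, \emph{not} on $A$. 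Choosing a sign $\varepsilon_{IJ}\in\{-1,0,+1\}$ for each cell then writes $|\det B_{IJ}|=\varepsilon_{IJ}\cdot \det B_{IJ}$ as a fixed integer combination of minors of $A$.

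It remains to note that the combinatorial data determining the right-hand side of the displayed formula for $\deg_{D,k}(f_A)$ takes only finitely many values: the subdivision $\mathcal S_k$ is a subset of the (finite) collection of cells indexed by pairs of subsets of $\{1,\ldots,N\}$ of the appropriate cardinalities, and to each cell we attach one of three signs. Assigning to every such (subdivision, sign pattern) configuration the matrix
\[
\sigma_{I'J'}\;=\;\sum_{\mathcal C_{IJ}\in\mathcal S_k} \varepsilon_{IJ}\,c^{(I,J)}_{I'J'}\;\in\;\Z,
\]
we obtain a finite set $\Sigma\subset\Z^{\binom{m}{k}^2}$. By construction, for every $A$ the matrix $\sigma(A)\in\Sigma$ associated to the subdivision and signs produced above satisfies $\deg_{D,k}(f_A)=\sum_{I'J'}\sigma_{I'J'}A_{I'J'}$.

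The main conceptual obstacle is that both the subdivision $\mathcal S$ and the signs of $\det B_{IJ}$ genuinely depend on $A$, so one must be careful to separate the $A$-independent data (the integer coefficients $c^{(I,J)}_{I'J'}$ built from the fixed lattice polytope $P_D$) from the $A$-dependent data (which cells appear, and with which sign). Once this split is made, finiteness is immediate from the finiteness of the vertex set of $P_D$.
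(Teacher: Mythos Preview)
Your proposal is correct and follows essentially the same route as the paper: expand $|\det B_{IJ}|$ as a sign times an integer linear combination of the $k\times k$-minors of $A$ (with coefficients depending only on the lattice vectors $v_{i_\ell}-v_{i_0}$, $v_{j_\ell}-v_{j_0}$), and then observe that the only $A$-dependent data are the choice of cells and their signs, of which there are finitely many. The paper states this more tersely, but your explicit separation of the $A$-independent coefficients $c^{(I,J)}_{I'J'}$ from the $A$-dependent subdivision/sign data is exactly the content of the paper's closing observation.
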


\subsection{Proof of Theorem C'}
Our proof is much inspired by the proof of Proposition~3.1 in \cite{BK} and the proof of Proposition~7.3 in \cite{HP}. We will
argue by contradiction using a result from combinatorics, which says
that if $\alpha_n$ and $\beta_n$ are two sequences that each satisfies
a linear recurrence, then the set of $n\in \N$, for which $\alpha_n=\beta_n$,
is either finite or eventually periodic, see \cite[Chapter~4, Exercise~3]{St}.
In particular if $\alpha_n=\beta_n$ for infinitely many $n$, then for some $a,b\in \N$,
\begin{equation}\label{joho}
\alpha_{a+b\ell}=\beta_{a+b\ell} \text{ for }\ell \gg 0.
\end{equation}

Now, let $\alpha_n =\deg_{D,k}(f_A^n)$.
By Lemma \ref{flyg} $\alpha_n=\sum_{IJ} \sigma_{IJ}(n)A^n_{IJ}$, where
$A_{IJ}^n$ is the $IJ$-minor of $A^n$, for
some matrix $\sigma(n)\in\Sigma$. Since $\Sigma$ is a finite set,
there is at least one $\sigma\in\Sigma$ such that $\sigma(n)=\sigma$ for
infinitely many $n$. Pick such a $\sigma=(\sigma_{IJ})\in\Sigma$ and let
$\beta_n=\sum_{IJ}\sigma_{IJ}A_{IJ}^n$.
Let $\chi (r)$ be the characteristic polynomial of
$\Lambda^kA$.
%
By the Caley-Hamilton theorem the
entries $A^n_{IJ}$ of $(\Lambda^k A)^n$, cf.\ \eqref{CB}, satisfy the
linear recurrence
$\chi (A_{IJ}^n)$, see \eqref{caley}.
It follows that $\beta_n$ satisfies the linear recurrence $\chi (\beta_n)$,
and $\alpha_n=\beta_n$ for infinitely many $n$.



Next, we claim that, if $A$ is as in the assumption, then for each choice of $a,b\in\N$, $\beta_{a+b\ell}<0$
for infinitely many $\ell$.
Since the eigenvalues of $A$ satisfy
$$|\mu_{k-1}|<|\mu_k|=|\mu_{k+1}|<|\mu_{k+2}|$$ it follows that
$\mu_{[k]}=:\mu$ and $\mu_{\{1,\ldots, k-1,k+1\}}$ are the two eigenvalues of
$\Lambda^kA$ of largest modulus, and that the other eigenvalues
$\mu_{I_3}, \ldots, \mu_{I_{{m\choose k}}}$ are of strictly smaller
modulus. Moreover, since $\mu_{k+1}=\bar\mu_{k}$ and
$\mu_k/\bar\mu_{k}$ is not a root of unity,
it follows that $\mu_{\{1,\ldots, k-1,k+1\}}=\bar\mu$ and that
$\mu/\bar\mu$ is not a root of unity.
Hence we can write
\[
(\Lambda^k A)^n=
P
\begin{bmatrix}
\mu^n & 0 & 0 & \cdots \\
0 & \bar \mu^n & 0 & \cdots \\
0 & 0 & \mu_{I_3}^n & \\
\vdots & \vdots & & \ddots
\end{bmatrix}
P^{-1}
\]
for some invertible matrix $P$.
It follows that
\[
\beta_n=\sum \sigma_{IJ}A^n_{IJ}=C\mu^n + D\bar \mu^n+\O
(|\mu_{I_3}|^n)
\]
where $C$ and $D$ are independent of $n$.
Since $\beta_n$ is real it follows that $D=\bar C$, so that
\[
\beta_n=2\Re (C) \cdot\Re (\mu^n) + \O (|\mu_{I_3}|^n).
\]
Since $\mu= |\mu|\cdot e^{2\pi i\theta}$ with $\theta\not\in\Q$, it
follows that $\arg(\mu^{a+b\ell})$ is dense in $[0,2\pi)$. In
  particular, $\Re (\mu^{a+b\ell})<0$ for infinitely many $\ell$ and $\Re (\mu^{a+b\ell})>0$ for infinitely many $\ell$, and
  thus, since $|\mu_{I_3}|<|\mu|$, $\beta_{a+b\ell}<0$ for infinitely
  many $\ell$.


Assume that $\alpha_n$ satisfies a linear recurrence. Then, since
$\alpha_n=\beta_n$ for infinitely many $n$,
 \eqref{joho} holds for some $a,b$, but this contradicts that $\alpha_n
>0$. This proves Theorem C'.

\end{document}